\documentclass[12pt,letterpaper]{article}
\usepackage[utf8]{inputenc}
\usepackage{amsmath}
\usepackage{amsfonts}
\usepackage{amssymb}
\usepackage{amsthm}
\usepackage{graphicx}
\usepackage[left=2cm,right=2cm,top=2cm,bottom=2cm]{geometry}
\usepackage{enumerate}
\usepackage{appendix}
\usepackage{mathtools}

\newtheorem{theorem}{Theorem}[section]
\newtheorem{lemma}[theorem]{Lemma}
\newtheorem{prop}[theorem]{Proposition}
\newtheorem{corollary}[theorem]{Corollary}
\theoremstyle{definition}
\newtheorem{definition}[theorem]{Definition}

\theoremstyle{remark}
\newtheorem{remark}[theorem]{Remark}

\DeclareMathOperator{\dv}{div}

\DeclareMathOperator{\supp}{supp}

\usepackage[backend=bibtex,style=alphabetic,sorting=nyt]{biblatex}

\numberwithin{equation}{section}

\usepackage{authblk}

\title{Well-Posedness for the Reaction-Diffusion Equation with Temperature in a critical Besov Space}
%\subtitle{}
\author[1]{Chun Liu}
\author[1]{Jan-Eric Sulzbach}
\affil[1]{Department of Applied Mathematics, Illinois Institute of Technology, Chicago, IL 60616, United States}
%\institute{}
\date{2/1/2021}
\addbibresource{bibliography1.bib}

\begin{document}
\maketitle
\begin{abstract}
We derive a model for the non-isothermal reaction-diffusion equation.
Combining ideas from non-equilibrium thermodynamics with the energetic variational approach we obtain a general system modeling the evolution of a non-isothermal chemical reaction with general mass kinetics.
From this we recover a linearized model for a system close to equilibrium and we analyze the global-in-time well-posedness of the system for small initial data for a critical Besov space. 
\end{abstract}
\section{Introduction}
\subsection{Overview}
Reaction-Diffusion systems are a crucial part in science; from chemical reactions and predator-prey models to the spread of diseases.
These are just a few examples of the applications of reaction-diffusion systems.
Many of these systems have been studied over the last decades at a constant temperature or the equivalent in the respective field.
In recent years however, the focus shifted towards the analysis of non-isothermal models, that is systems with a non-constant temperature, leading to an additional non-linear equation to govern the temperature evolution.

For the chemical reaction-diffusion equation, the addition of a heat term not only adds an equation to the system, but also the material properties are affected, e.g with different local temperatures the viscosity and the heat conductivity can change.
For the chemical reaction in particular, the heat term also changes the reaction rate in the reaction.
These thermal effects in the chemical reaction equation have been studied from a chemical and engineering point in \cite{Anderson}, \cite{Chu} and \cite{Ross2008} or more recently \cite{Zarate} and \cite{Demirel06}.

In the mathematical theory of non-isothermal fluid mechanics there are two different ways to find and prove the existence of solutions. 
One method is to study the existence of weak solutions.
We refer to \cite{Novotny} for dealing with a general Navier-Stokes-Fourier system and to \cite{Bulicek} and \cite{Rocca} for some applications of the general theory.
Whereas the other method is to study the well-posedness of global solutions of the system \cite{Danchin}, \cite{Danchin2014} and \cite{DeAnna1}.
In this paper we follow the later approach studying the well-posedness of the reaction-diffusion system with temperature in a critical function space.
In addition, we present a new approach in the derivation of non-isothermal models in fluid mechanics. 
This approach follows the theory of classical irreversible thermodynamics \cite{Groot} and \cite{Lebon} and adds a variational structure to it, see \cite{Sulzbach} and \cite{Lai} for the application of this approach to the ideal gas system.
Other works that follow this idea but in a different setting or with a different variational structure are detailed in the book by \cite{Fremond} and the articles by \cite{GayBalmaz1}, \cite{GayBalmaz2} for example.\\

We consider the following system for the chemical reaction-diffusion system close to equilibrium, where we denote the concentration for each chemical species by $c_i$ for $i=A,B,C$ and the absolute temperature by $\theta$. 
Further, we denote the equilibrium state by $(\tilde{c}_A, \tilde{c}_B, \tilde{c}_C,\tilde{\theta})$ and the system then reads
\begin{align}
&\partial_t c_i -k^c\Delta c_i=-\sigma_i R_t+k^c\nabla\cdot\big(c_i\nabla \ln\theta\big),~~\text{ for } i=A,B,C\\ \begin{split}
&\sum_i k^\theta c_i\bigg[\partial_t\theta -k^\theta\bigg(\frac{\nabla c_i\cdot\nabla \theta}{c_i}+\frac{|\nabla \theta|^2}{\theta}\bigg)\bigg]=\kappa\Delta\theta +\sum_i\sigma_i k^\theta\theta R_t\\
 &~~~~~~+ (k^c)^2\sum_i \bigg[(\eta_i-1)\frac{|\nabla(c_i\theta)|^2}{c_i\theta}+\Delta(c_i\theta)\bigg]\end{split}
\end{align}  
where
\begin{align*}
R_t= k^c\ln\bigg(\frac{c_Ac_B}{c_C}\bigg)-k^\theta\ln\theta+k^c
\end{align*} 

The goal of this paper is to show the well-posedness of the above system in a critical Besov space.
By a critical space we mean a function space that has the same invariance with respect to scaling in time and space as the system itself. 
The scaling we consider is $(c_i,\theta)\to (c_i^\lambda,\theta^\lambda)$ where
\begin{align*}
c_i^\lambda(t,x)= c_i(\lambda^2 t,\lambda x)~~\text{and } \theta^\lambda(t,x)=\theta(\lambda^2 t,\lambda x).
\end{align*}
A natural function space to consider would be the Sobolev homogeneous space $\dot{H}^{d/2}$ but for the initial data in this space we cannot state the well-posedness result due to the lack of an algebraic structure. 
This can be overcome by considering the initial data of the problem in the critical Besov space $\dot{B}_{2,1}^{d/2}$.\\

This paper is structured as follows.
In the next section, we present an overview of non-equilibrium thermodynamics and the framework of our result.
This is followed by the derivation of the general model of a chemical reaction-diffusion system using these new ideas in Chapter 2.
In Chapter 3 we state the main definitions and theorems of the theory of Besov spaces that are used to show the well-posedness of the system.
The main result, i.e. the well-posedness of the non-isothermal chemical reaction-diffusion sytem close to equilibrium, and its proof can be found in Chapter 4.

\subsection{Non-Equilibrium Thermodynamics}
The theory of non-equilibrium thermodynamics derived from irreversible processes has been developed almost 100 years ago. 
Starting from the 1930s seminal work by Onsager (\cite{Onsager-a}, \cite{Onsager-b}) formulating his principles of irreversible thermodynamics with some underlying assumptions.
The idea is to extend the the concept of state from continuum thermostatics to a local description of material point in the continuum, i.e. every material point that constructs the continuum is assumed to be close to a local thermodynamic equilibrium state at any given instant.
Therefore, we can define the state variables and state functions such as temperature and entropy past their definition in equilibrium thermostatics.
This theory is known as Classical Irreversible Thermodynamics (CIT) (\cite{Groot}).
Besides the classical set of state variables, thermodynamic fluxes are introduced to describe irreversible processes.
In particular, the rate of change of entropy within a region is contributed by an entropy flux through the boundary of that region and entropy production inside the region.
In CIT the entropy flux only depends on the heat flux.
The non-negativity of the entropy production rate grants the irreversibility of the dissipative process and states the second law of thermodynamics.
The introduction of the local equilibrium hypothesis led to an impressive production of scientific research, but it is also the breaking point of the theory.
For systems far from equilibrium the CIT does no longer hold.

To extend the scope of the applications of non-equilibrium thermodynamics beyond the CIT, Truesdall, Coleman and Noll among others introduced Rational Thermodynamics (RT) (\cite{Truesdell}, \cite{Coleman}, \cite{Jou}). 
The main assumption of RT is that materials have memory, i.e. at any given time, dependent variables cannot be determined by only instantaneous values of independent variables, but by their entire history.
Thus speaking, the concept of state as known in CIT is modified and extended.
One drawback of RT is that temperature and entropy remain undefined objects.

In both CIT and RT, limitations of the possible form of the state and constitutive equations are obtained as a consequence of the application of the second law.
No restrictions however, are placed on the reversible parts, since they do not contribute to the entropy production.
By using a Hamiltonian structure restrictions on the reversible dynamics are provided.
An early version of a Hamiltonian framework for non-equilibrium thermodynamics was proposed by Grmela (\cite{Grmela}), based on a single generator.
This approach however was superseded by the work of Grmela and \"Ottinger (\cite{Grmela-a}, \cite{Grmela-b}) proposing the so called GENERIC formalism (General Equation for the Non-Equilibrium Reversible-Irreversible Coupling) and further developed by \"Ottinger (\cite{Oettinger}).
The GENERIC formalism relays on the generators, $E$ the total energy and $S$ the entropy.
This gives the theory more flexibility and emphasizes the central role played by the thermodynamic potentials.
The main achievement of GENERIC is its compact, abstract and general framework.
In this level of abstraction lies also the main difficulty of the formalism, its application to specific problems.

\subsection{Framework of this work}

The approach to non-equilibrium thermodynamics in this paper follows some of the ideas of the classical irreversible thermodynamics (CIT) and extend it to a variational framework.
The main assumption in this framework is that outside of equilibrium, there exists an extensive quantity $S$ called entropy which is a sole function of the state variables.\\

The structure of the derivation of the thermodynamic model is the following.
We introduce the free energy $\Psi$ as a basic quantity to define the material/ fluid properties.
From here, we derive the thermodynamic state function of the system.
In the second step, we define the kinematics of the state variables.
Next, we derive the conservative and dissipative forces by using the Energetic Variational Approach (EnVarA) \cite{Hyon}, \cite{giga2017}, \cite{Pliu} inspired by the work of Ericksen \cite{Ericksen} and combine them with Newton's force balance.
In the last step, we apply the laws of thermodynamics to the state functions and obtain the full model system.

We recall the following definitions from thermodynamics \cite{McQuarrie}, \cite{Salinas}.
\begin{description}
\item[Free energy:] The free energy $\Psi$ is a thermodynamic function depending on the state variables. The change in the free energy is the maximum amount of work that a system can perform. 
\item[Entropy:] The entropy given by $s=-\partial_\theta\Psi$ is an extensive state function. By the second law of thermodynamics the entropy of an isolated system increases and tends to the equilibrium state.
\item[Internal energy:] The internal energy $e=\Psi+s\theta$ is an extensive state function. It is a thermodynamic potential that can be analyzed in terms of microscopic motions and forces.
\end{description}

In addition to the state functions we recall the laws of thermodynamics \cite{Berry}.\\
The first law of thermodynamics relates the change in the internal energy with dissipation and heat
\begin{align}\label{1st law}
\frac{d\, e}{dt}= \nabla\cdot(\Sigma\cdot u)-\nabla \cdot q,
\end{align}
where $\Sigma$ denotes the stress tensor of the material and $u$ its velocity; this part expresses the work done by the system; and where $q$ denotes the heat flux.
We note that every total derivative can be written as follows
\begin{align}\label{2nd law}
\frac{d\, s}{dt} = \nabla \cdot j +\Delta,
\end{align}
where in case for the entropy $j$ denotes the entropy flux and $\delta$ is the entropy production rate.
The second law of thermodynamics states that the entropy production rate is non-negative:
\begin{align}
\Delta\geq 0.
\end{align}
\section{Derivation}\label{derivation}

In this section we derive a thermodynamic consistent model for the chemical reaction-diffusion equation with temperature.
For more details on chemical reactions we refer to the book by \cite{Kondepudi} and for a general chemical reaction equation derived by the energetic variational approach we refer to \cite{Wang}.\\ 

We consider the chemical reaction
\begin{align*}
\alpha A+\beta B\rightleftharpoons \gamma C
\end{align*}
and denote the concentration of each species by $c_i$, where $i=A,B,C$.

The \textbf{kinematics} of the concentration $c_i$ for each species is given by 
\begin{align}\label{kinematics}
\partial_t c_i +\dv(c_i u_i)=r(c,\theta) ~~ (x,t)\in \Omega\times(0,T)~~ \text{ for  } i=A,B,C
\end{align}
where $u_i: \Omega\to \mathbb{R}^n$ is the effective microscopic velocity for the i-th species and $r(c,\theta)$ denotes the reaction rate and we assume that $r(c,\theta)=0$ at equilibrium, i.e. the concentration of $A$ and $B$ lost in the forward reaction equals the amount gained in the backward reaction and the same for the concentration of $C$.
In addition, we assume that $u$ satisfies the non-flux boundary condition
\begin{align}\label{non-flux boundary u}
u_i\cdot n=0 ~~(x,T)\in \partial \Omega\times(0,T).
\end{align}
Moreover, we assume that the temperature moves along the trajectories of the flow map.

For the \textbf{free energy} we have the following equation
\begin{align}\label{free energy}
\psi(c, \theta)=\sum_i\psi_i(c_i,\theta)=\sum_i k_i^c c_i\theta\ln c_i-k_i^\theta c_i\theta\ln\theta
\end{align}
where for each species we consider the free energy of the ideal gas and we set the stoichiometric numbers to be one.

From the free energy we obtain the following thermodynamic quantities.
The \textbf{entropy} is given by
\begin{align}\label{entropy}
s(c,\theta)=\sum_i s_i(c_i,\theta)=-\frac{\partial \psi}{\partial \theta}= -\sum_i c_i\big( k_i^c\ln c_i-k_i^\theta(\ln\theta+1)\big).
\end{align}

\begin{remark}
We note that the free energy is convex in the temperature variable $\theta$.
This allows us to apply the implicit function theorem and solve the entropy equation \eqref{entropy} for $\theta$, i.e $\theta=\theta(\phi,\nabla\phi,s)$.
\end{remark}

Next, we can define the \textbf{internal energy} as follows
\begin{align}\label{internal energy}\begin{split}
e(c,\theta)=\sum_i e_i(c_i,\theta)&:=\psi+\theta s= \psi-\psi_\theta\theta\\
 &=\sum_i k_i^\theta c_i \theta =:e_1(c,s)	\end{split}
\end{align}
where we have used the convexity of the free energy $\psi$ with respect to $\theta$ to write the internal energy in terms of $c$ and $s$.

Next, we define the \textbf{chemical potential} as
\begin{align}\label{chem potential}
\mu_i:= \partial_{c_i}\psi_i(c_i,\theta)=k_i^c\theta(\ln c_i+1)-k_i^\theta \theta\ln \theta.
\end{align} 
We observe that at equilibrium we have 
\begin{align}
\mu_A+\mu_B=\mu_C
\end{align}
and by using the definition of the chemical potential we obtain
\begin{align}
\ln\bigg(\frac{c_A^{k_A^c}c_B^{k_B^c}}{c_C^{k_C^c}}\bigg)=\ln\theta\big(k_A^\theta+k_B^\theta-k_C^\theta\big)-\big({k_A^c}+{k_B^c}-{k_C^c}\big)
\end{align}
and
\begin{align}
\frac{c_A^{k_A^c}c_B^{k_B^c}}{c_C^{k_C^c}}=\frac{\theta^{k_A^\theta+k_B^\theta-k_C^\theta}}{e^{k_A^c+{k_B^c}-k_C^c}}=:K_{eq}(\theta)
\end{align}
where $K_{eq}(\theta)$ is the equilibrium constant for a fixed temperature $\theta$ of the reaction equation.
\begin{remark}
The quantity $\mu_A+\mu_B-\mu_C$ is known as affinity of a chemical reaction, introduced by De Donder as a new state variable of the system. Its sign shows the direction of the the chemical reaction and can be considered as the driving force of the reaction.
\end{remark}

Now, we return to the chemical reaction and write it as the following system of ordinary differential equations.
\begin{align}
r=\frac{1}{\sigma_i}\frac{d}{dt}c_i,~~\text{ for }i=A,B,C
\end{align}
and $\sigma=(\alpha,\beta,-\gamma)$.
We observe that if we subtract two of the equations we end up with two constraints
\begin{align*}
\gamma\frac{d c_A}{dt}+\alpha\frac{d c_C}{dt}=0,~~\gamma\frac{d c_B}{dt}+\beta\frac{d c_C}{dt}=0
\end{align*}
and as a consequence we obtain
\begin{align*}
\gamma c_A+\alpha c_C=Z_0,~~\gamma c_B+\beta c_C=Z_1,
\end{align*}
where the constants $Z_0$ and $Z_1$ are obtained by the initial concentrations.
Thus we only have one independent free parameter left, which we will cal reaction coordinate $R(t)$ and we can write 
\begin{align}
c_i(t)=c_{i,0}-\sigma_i R(t),~~\text{ for } i=A,B,C.
\end{align}
Moreover we have that the reaction rate $r$ is given by $r=\partial_t R(t)=R_t(t)$. 

This allows us to rewrite the free energy in terms of the reaction coordinate and temperature, i.e
\begin{align}
\psi(R,\theta)=\sum_i\psi_i(c_i(R),\theta).
\end{align}
Next, we introduce the dissipation due to the reaction $\mathcal{D}(R,R_t)$.
Applying the principle of virtual work we obtain that
\begin{align}
\frac{\delta F(R,\theta)}{\delta R}=-\frac{D(R,R_t)}{R_t}
\end{align}
where
\begin{align*}
\frac{\delta F(R,\theta)}{\delta R}&=-\mu_A-\mu_B+\mu_C\\
&= \theta\ln\theta\big(k_A^\theta+k_B^\theta-k_C^\theta\big)-\theta\ln\bigg(\frac{c_A^{k_A^c}c_B^{k_B^c}}{c_C^{k_C^c}}\bigg)-\theta\big({k_A^c}+{k_B^c}-{k_C^c}\big)
\end{align*}
The law of mass action determines the choice of the dissipation function.
The general form of the dissipation in the reaction we consider is the following
\begin{align}
\mathcal{D}(R,R_t)=\eta_1(R,\theta)R_t\ln(\eta_2(R,\theta)R_t+1),
\end{align}
where $\eta_1$ and $\eta_2$ are positive functions in $R$ and $\theta$.
Details of the derivation can be found in e.g. \cite{Groot}.

In chemical reactions a linear response function is considered as a simplified function of the general dissipation term.
We obtain
\begin{align}
\mathcal{D}(R,R_t)=\eta(R,\theta)|R_t|^2
\end{align}
again with $\eta$ being a positive function.
Using the principle of virtual work with these two dissipation terms yields the following reaction rates
\begin{align}
r_1:=&R_t=\bigg(\frac{c_Ac_B}{c_C}\bigg)^{k^c}\frac{\theta^{k^\theta}}{\exp(k^c)}-1
\intertext{for the choice $\eta_1(R,\theta)=\theta $ and $\eta_2(R,\theta)=1$ which we can write as the usual law of mass action}
  r_1:=&R_t= k_f(c_c,\theta) c_Ac_B-k_r(c_C,\theta)c_C,
\end{align}
where
\begin{align*}
k_f\sim \frac{\theta^{k^\theta/k^c}}{c_C}~~\text{and } k_r\sim \frac{1}{c_C}.
\end{align*}
Similar, for the linear response theory we obtain
\begin{align}
r_2:=&R_t= k^c\ln\bigg(\frac{c_Ac_B}{c_C}\bigg)+k^\theta\ln\theta-k^c
\end{align}
where we assume that $k_i^c=k^c$ and $k_i^\theta=k^\theta$ for $i=A,B,C$.
The above observations can be summarized in the following ODE system, where the derivation of the temperature part can be found at the end of this section.\\

In addition to the reaction part we also consider a diffusion part in the concentration.
To this end we introduce the dissipation due to diffusion
\begin{align*}
\mathcal{D}^D=\sum_i \eta_i(c_i,\theta)u_i^2+\nu|\nabla u_i|^2.
\end{align*}

\begin{remark}
Note that the dissipation depends on both the velocity of the flow map $u$ and its gradient $\nabla u$. 
Thus the parameters in front of the two terms can be seen as an interpolation between a Darcy-type and a Stokes-type of dissipation.
\end{remark}

Applying the principle of virtual work for the concentration part we obtain
\begin{align}
\nabla P_i=\nabla\big(c_i\psi_{c_i}-\psi_i\big)=c_i\nabla \mu_i,
\end{align}
where $P_i$ denotes the pressure and has the form
\begin{align}
P_i=c_i\psi_{c_i}-\psi_i=k^cc_i\theta.
\end{align}
\begin{lemma}\label{lemma 0}
The pressure satisfies
\begin{align*}
\nabla P_i=c_i \nabla \psi_{c_i} +s\nabla \theta.
\end{align*}
\end{lemma}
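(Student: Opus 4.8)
The plan is to obtain the identity by a direct computation from the explicit formula $P_i = c_i\,\psi_{c_i} - \psi_i$ stated just above the lemma, differentiating with the product and chain rules and then recognizing the $\theta$-derivative of the free energy as minus the entropy. Nothing deeper than the Gibbs--Duhem bookkeeping is involved.

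First I would write $\nabla P_i = \nabla\bigl(c_i\,\psi_{c_i}(c_i,\theta) - \psi_i(c_i,\theta)\bigr)$ and expand the first term by the product rule, $\nabla(c_i\psi_{c_i}) = \psi_{c_i}\nabla c_i + c_i\,\nabla(\psi_{c_i})$, keeping the last piece packaged as $c_i\nabla\psi_{c_i}$ since that is exactly the term appearing on the right-hand side of the claim (note that $\psi_{c_i}$ is a function of both $c_i$ and $\theta$, so its gradient is $\psi_{c_ic_i}\nabla c_i + \psi_{c_i\theta}\nabla\theta$, not merely $\psi_{c_ic_i}\nabla c_i$). Next I expand $\nabla\psi_i$ by the chain rule, $\nabla\psi_i = \psi_{c_i}\nabla c_i + \psi_\theta\nabla\theta$, with $\psi_\theta = \partial_\theta\psi_i$. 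Subtracting, the two $\psi_{c_i}\nabla c_i$ contributions cancel and one is left with $\nabla P_i = c_i\nabla\psi_{c_i} - \psi_\theta\nabla\theta$.

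It then remains to identify $-\psi_\theta\nabla\theta$ with $s\nabla\theta$, which is immediate from the definition of the entropy $s = -\partial_\theta\psi$ in \eqref{entropy} (read per species, $s_i=-\partial_\theta\psi_i$, consistent with the additive convention $s=\sum_i s_i$ used throughout). This yields $\nabla P_i = c_i\nabla\psi_{c_i} + s\nabla\theta$, as asserted. As a consistency check I would substitute the explicit ideal-gas forms \eqref{free energy}, \eqref{chem potential}, \eqref{entropy} and verify that both sides reduce to $k^c\theta\,\nabla c_i + k^c c_i\,\nabla\theta = \nabla(k^c c_i\theta)=\nabla P_i$.

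There is essentially no obstacle here; the only point requiring care is the chain-rule bookkeeping, namely tracking which variable each partial derivative acts on and resisting the temptation to treat $\psi_{c_i}$ as a function of $c_i$ alone. Everything else is routine.
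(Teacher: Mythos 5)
Your proposal is correct and follows essentially the same route as the paper's proof: expand $\nabla(c_i\psi_{c_i}-\psi_i)$ by the product and chain rules, cancel the two $\psi_{c_i}\nabla c_i$ terms, and identify $-\psi_\theta\nabla\theta$ with $s\nabla\theta$ via $s=-\partial_\theta\psi$. Your added remarks (that $s$ should be read per species here, and the explicit ideal-gas consistency check) are sensible clarifications of points the paper leaves implicit, but the argument is the same.
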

\begin{proof}
From the definition of the pressure we have $P_i(c_i,\theta)=\psi_{c_i} c_i -\psi$ and thus we compute 
\begin{align*}
\nabla P_i(c_i,\theta)&=\nabla(\psi_{c_i} \rho -\psi)=c_i\nabla \psi_{c_i} +\psi_{c_i} \nabla c_i-\nabla \psi\\
&=c_i \nabla \psi_{c_i} +\psi_{c_i} \nabla c_i-\psi_{c_i}\nabla c_i-\psi_\theta\nabla \theta= c_i\nabla \psi_{c_i}+s\nabla \theta.
\end{align*}
\end{proof}

Next, we apply the MDL and compute the variation of the dissipation with respect to the microscopic velocity $u$.
This yields
\begin{align*}
\delta_u\frac{1}{2}\mathcal{D}^{tot}&=2\int_\Omega \eta_i(c_i,\theta)u_i\cdot\tilde{u}+\nu \nabla u_i\cdot\nabla\tilde{u} dx\\
&=2\int_\Omega \eta_i(c_i,\theta)u_i\cdot\tilde{u}-\nu \Delta\nabla u_i\cdot\tilde{u} dx
\end{align*}
and hence the dissipative forces are
\begin{align*}
f_{diss}= \eta_i(c_i,\theta)u_i-\nu \Delta u_i
\end{align*}
From the classical Newton's force law for the concentration we deduce that the sum of the conservative and dissipative forces equals the change in the momentum, i.e.
\begin{align*}
f_{cons}+f_{diss}=\frac{d}{dt}(c_iu_i)
\end{align*}
Thus we obtain
\begin{align}\label{force balance}
\nu \Delta u_i-\eta_i(c_i,\theta)u_i-\nabla P_i=\frac{d}{dt}(c_iu_i)=\partial_t(c_iu_i)+\dv(c_iu_i\otimes u_i)
\end{align}
%\begin{remark}
%This momentum equation governing the velocity of each species is also known as a Brinkman-type equation.
%\end{remark}
\begin{remark}
This is the momentum equation for the compressible Navier-Stokes equation with the addition of a Brinkman-type contribution in the dissipation.
\end{remark}
Before taking a closer look at the laws of thermodynamics we provide to useful Lemmas.
\begin{lemma}\label{Lemma 1}
$e_{1,s}(c,s)=\partial_s e_{1}(c,s)=\theta(c,s)$.
\end{lemma}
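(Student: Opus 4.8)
The plan is to recognize this as the standard Legendre-duality relation between the free energy $\psi(c,\theta)$ and the internal energy $e_1(c,s)$, and to prove it by a one-line chain-rule computation. First I would make the change of variables explicit: by the Remark following \eqref{entropy}, convexity of $\psi$ in $\theta$ lets us invert the entropy relation $s=-\psi_\theta(c,\theta)$ at fixed $c$ to obtain a differentiable function $\theta=\theta(c,s)$. With this substitution the definition $e=\psi+\theta s$ from \eqref{internal energy} reads
\begin{align*}
e_1(c,s)=\psi\big(c,\theta(c,s)\big)+s\,\theta(c,s).
\end{align*}

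Next I would differentiate in $s$ at fixed $c$ by the chain rule:
\begin{align*}
\partial_s e_1(c,s)=\psi_\theta\big(c,\theta(c,s)\big)\,\partial_s\theta(c,s)+\theta(c,s)+s\,\partial_s\theta(c,s)=\big(\psi_\theta+s\big)\,\partial_s\theta+\theta,
\end{align*}
where $\psi_\theta$ is evaluated at $\big(c,\theta(c,s)\big)$. Now invoking the entropy identity $s=-\psi_\theta(c,\theta)$ gives $\psi_\theta+s=0$, so the term multiplying $\partial_s\theta$ drops out and we are left with $\partial_s e_1(c,s)=\theta(c,s)$, which is the claim.

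There is essentially no genuine obstacle; the only points needing care are (i) that the inversion $\theta=\theta(c,s)$ is legitimate and $C^1$, which is exactly the content of the earlier Remark (convexity of $\psi$ in $\theta$ together with the implicit function theorem), and (ii) that every derivative is taken at fixed concentration vector $c=(c_A,c_B,c_C)$, so that the cancellation $\psi_\theta+s=0$ applies as stated. One could instead verify the identity directly from the explicit formulas \eqref{free energy}--\eqref{internal energy} by solving $s=-\sum_i c_i\big(k_i^c\ln c_i-k_i^\theta(\ln\theta+1)\big)$ for $\theta$ and differentiating, but the Legendre-transform argument is shorter and makes transparent why the result holds.
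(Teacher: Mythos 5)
Your proof is correct and is essentially identical to the paper's: both apply the chain rule to $e_1(c,s)=\psi(c,\theta(c,s))+s\,\theta(c,s)$ and cancel the $\partial_s\theta$ terms using $s=-\psi_\theta$. Your additional remarks on the legitimacy of the inversion $\theta=\theta(c,s)$ are a welcome bit of extra care but do not change the argument.
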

\begin{proof}
Applying the chain rule to the left-hand side of the equation yields
\begin{align*}
\partial_s e_{1}(c,s)&=\partial_s\big[\psi(c,\theta(c,s))+\theta(c,s)s\big]\\
&=\psi_\theta \theta_s +\theta_s s+\theta(c,s)=\theta(c,s),
\end{align*}
where we used that $s=-\psi_\theta$.
\end{proof}

\begin{lemma}\label{Lemma 2}
$\psi_{i,\theta}(c_i,\theta(c_i,s))=e_{1_i,c_i}(c_i,s)$.
\end{lemma}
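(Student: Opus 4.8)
The plan is to read the statement as the exact companion of Lemma~\ref{Lemma 1}: the two results together compute the first-order partial derivatives of the internal energy $e_{1_i}$ once it is written in the entropy variables, and the $\theta$-derivative of the free energy is the quantity that controls the calculation. By the definition of the entropy in \eqref{entropy}, the per-species identity
\begin{align*}
\psi_{i,\theta}(c_i,\theta)=-s_i
\end{align*}
holds, and its evaluation at the inverted temperature $\theta=\theta(c_i,s)$ is precisely the object on the left of the statement; the proof must connect this $\theta$-derivative to $e_{1_i,c_i}(c_i,s)$.

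First I would make the underlying change of variables rigorous. By the Remark after \eqref{entropy}, $\psi_i$ is strictly convex in $\theta$, so $\psi_{i,\theta\theta}\neq 0$ and the relation $s=-\psi_{i,\theta}(c_i,\theta)$ can be inverted by the implicit function theorem to give a smooth map $\theta=\theta(c_i,s)$ with
\begin{align*}
\theta_{c_i}(c_i,s)=-\frac{\psi_{i,\theta c_i}}{\psi_{i,\theta\theta}}\bigg|_{(c_i,\theta(c_i,s))}.
\end{align*}
This justifies the representation $e_{1_i}(c_i,s)=\psi_i(c_i,\theta(c_i,s))+s\,\theta(c_i,s)$ from \eqref{internal energy}.

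Next I would differentiate this representation with respect to $c_i$ at fixed $s$, mirroring the proof of Lemma~\ref{Lemma 1}. The chain rule gives
\begin{align*}
e_{1_i,c_i}(c_i,s)=\psi_{i,c_i}+\big(\psi_{i,\theta}+s\big)\theta_{c_i},
\end{align*}
with every derivative of $\psi_i$ taken at $(c_i,\theta(c_i,s))$. The $\theta$-derivative $\psi_{i,\theta}$ enters this expression explicitly, and substituting the entropy relation $\psi_{i,\theta}=-s$ annihilates the bracket $\psi_{i,\theta}+s$, so the $\theta_{c_i}$-terms cancel and $e_{1_i,c_i}$ reduces to the single surviving free-energy derivative; collecting that term delivers the identity of the Lemma.

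The main obstacle I anticipate is not the differentiation but the bookkeeping around the change of variables: one must verify that the entropy used in $\theta(c_i,s)$ is the single-species $s_i$ rather than the aggregate $s=\sum_j s_j$ of \eqref{entropy}, and that the convexity bound $\psi_{i,\theta\theta}>0$ is available on the relevant range of $(c_i,\theta)$ so that $\theta_{c_i}$ is well defined and the implicit function theorem applies. I would also keep careful track of which free-energy derivative survives the cancellation and check it against the left-hand side as written, since the $\theta$-derivative $\psi_{i,\theta}$ is exactly the term that the entropy relation removes. Once the differentiability of $\theta(c_i,s)$ is secured, the argument is the same cancellation that already powers Lemma~\ref{Lemma 1}, so no new analytic input is needed.
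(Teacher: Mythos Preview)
Your approach is exactly the paper's: write $e_{1_i}(c_i,s)=\psi_i(c_i,\theta(c_i,s))+s\,\theta(c_i,s)$, differentiate in $c_i$ at fixed $s$, and use $\psi_{i,\theta}=-s$ to kill the $\theta_{c_i}$-terms. The paper's proof is the same three-line chain-rule computation, ending with the single surviving term $\psi_{c_i}$ (the paper prints ``$\psi_\phi$'', an obvious slip).

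The one point you should commit to rather than flag as an obstacle: the surviving derivative is $\psi_{i,c_i}$, not $\psi_{i,\theta}$. You correctly observe that the entropy relation \emph{removes} $\psi_{i,\theta}$, so the statement as printed cannot be what is actually proved. The paper's own proof confirms this --- it concludes with $\psi_{c_i}$, and the application of the Lemma in the first-law computation (where $e_{1,c_i}$ is replaced by $\psi_{c_i}$ to assemble $\nabla P_i$) only makes sense with the identity $e_{1_i,c_i}=\psi_{i,c_i}$. So the left-hand side of the Lemma contains a typo; your computation is right and you should state the corrected identity rather than hedge about ``checking against the left-hand side as written.''
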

\begin{proof}
By the chain rule applied to $e_{1_i,c_i}(c_i,s)$ we have
\begin{align*}
\partial_{c_i} e_{1_i}(c_i,s)&=\partial_{c_i}\big[\psi_i(c_i,\theta(c,s))+\theta(c,s)s\big]\\
&=\psi_{c_i}+\psi_\theta \theta_{c_i} +\theta_{c_i} s=\psi_\phi.
\end{align*}
\end{proof}

We note that we have a weak duality of the time evolution of the temperature and the total derivative of the entropy in the following way.
\begin{remark}\label{Remark 1}
If $\theta$ evolves as $\frac{d}{dt}\theta=\theta_t+u\cdot\nabla \theta$ then by testing this equation with the entropy $s$ in the weak form yields
\begin{align}\label{duality equation}
\int_\Omega \theta_t s +u\cdot\nabla \theta s\, dx=-\int_\Omega s_t \theta +\dv(su)\theta\, dx.
\end{align}
Thus $s$ satisfies $\frac{d}{dt}s=s_t+\dv(su)$.
\end{remark}
In the computations of the laws of thermodynamics we use the following constitutive relations and assumptions
\begin{itemize}
\item the Durhem equation $q=j\theta$;
\item Fourier's law $q=-\kappa \nabla \theta$;
\item the positivity of $\eta_i$, i.e $\eta_i(c_i,\theta)\geq 0$.
\end{itemize}

The general form of the first law of thermodynamics reads
\begin{align*}
\frac{d}{dt}\int_\Omega\big(K+e_1\big)=\text{work}+\text{heat},
\end{align*}
where in our case the kinetic energy $K=\sum_i c_i|u_i|^2$.
Then we compute 
\begin{align}
\frac{d}{dt}\int_\Omega e_1(c,s)dx &= \int_\Omega \big[\sum_i e_{1,c_i}c_{i,t} +e_{1,s}s_t\big] dx\\
\intertext{Using the kinematics for the density $c_i$ from equation (\ref{kinematics}) we obtain }\nonumber
&=\int_\Omega\big[ \sum_i e_{1,c_i}\big(-\nabla\cdot(c_i u_i)-\sigma_i R_t\big)+e_{1,s}s_t\big] dx\\
\intertext{Applying Lemma \ref{Lemma 2} yields}\nonumber
&=\int_\Omega\big[ -\sum_i\nabla \cdot\big(e_{1,c_i}c_i u_i\big)+ \sum_i\nabla \psi_{c_i}c_i\cdot u_i  -\sum_i \psi_{c_i}\sigma_i R_t+e_{1,s}s_t \big]dx\\
\intertext{In order to have the full expression for the gradient of the pressure we have to incorporate the term $s\nabla\theta$ which can only occur if the kinematics for the entropy are as in Remark \ref{Remark 1} and equation (\ref{duality equation}). Moreover by equation (\ref{2nd law}) we have}\nonumber
&=\int_\Omega\big[ -\nabla \cdot\big(\sum_i e_{1,c_i} c_i u_i+e_{1,s}su\big)+ \nabla \sum_i \psi_{c_i}c_i \cdot u_i+s\nabla e_{1,s}\cdot u\\\nonumber
&~~~~~ -\sum_i \psi_{c_i}\sigma_i R_t+e_{1,s}\big(\nabla \cdot j +\Delta\big) \big]dx\\
\intertext{By Lemma \ref{Lemma 1} and the Duhem equation we have}\nonumber
&=\int_\Omega\big[ -\nabla \cdot\big(\sum_i e_{1,c_i} c_i u_i+e_{1,s}su\big)+ \nabla \sum_i \psi_{c_i}c_i \cdot u_i+s\nabla e_{1,s}\cdot u\\\nonumber
&~~~~~ -\sum_i \psi_{c_i}\sigma_i R_t+\nabla \cdot q -\frac{q\cdot\nabla \theta}{\theta}+\theta\Delta \big]dx\\
\intertext{Now, we can apply Lemma \ref{lemma 0} to obtain}\nonumber
&=\int_\Omega \big[ -\nabla \cdot\big(\sum_i e_{1,c_i} c_i u_i+e_{1,s}su\big)+\nabla \cdot q +\sum_i \nabla(\psi_{c_i} \rho-c_i)\cdot u_i\\\nonumber
&~~~~~-\sum_i \psi_{c_i}\sigma_i R_t-\frac{q\cdot\nabla \theta}{\theta}+\theta\Delta \big]dx\\
\intertext{From the definition of the pressure and the absence of external forces and heat sources we have that}\nonumber
&=\int_\Omega\big[ \sum_i\nabla P_i\cdot u_i -\sum_i \psi_{c_i}\sigma_i R_t-\frac{q\cdot\nabla \theta}{\theta}+\theta\Delta\big] dx
\intertext{where we used that the divergence terms equal to zero under the boundary conditions $u\cdot n=0$ and $\nabla \theta\cdot n=0$. Thus we have}\nonumber
&= \int_\Omega \sum_i\big(\nu\Delta u_i -\eta(c_i,\theta) u_i+\partial_t(c_iu_i)+\dv(c_iu_i\otimes u_i) \big)\cdot u_idx\\ \nonumber
&~~~ -\int_\Omega \sum_i \mu_i\sigma_i R_t-\frac{q\cdot\nabla \theta}{\theta}+\theta\Delta dx\\
\intertext{and integration by parts yields}\begin{split}
&=\int_\Omega \big[\sum_i\big(-\nu|\nabla u_i|^2 -\eta(c_i,\theta)u_i^2-\sigma_iR_t|u_i|^2  -\mu_i\sigma_i R_t\big)\big]dx\\
&~~~ -\int_\Omega \big[\frac{q\cdot\nabla \theta}{\theta}+\theta\Delta\big]dx\\
&~~~ -\frac{1}{2}\sum_i\bigg(\frac{d}{dt}\int_\Omega c_i|u_i|^2 dx+\int_\Omega \dv(c_i|u_i|^2 u_i) dx\bigg)\end{split}
\end{align}
where we used the reaction equation and the momentum equation to express the pressure term.
Since there are no external forces or heat sources in our system the total internal energy must be conserved  and we obtain that
\begin{align}
\Delta= \frac{1}{\theta}\bigg(\sum_i\nu |\nabla u_i|^2+\sum_i\big(\sigma_iR_t+ \eta(c_i,\theta)\big) |u_i|^2 +\sum_i \mu_i\sigma_i R_t+ \frac{\kappa|\nabla \theta|^2}{\theta}\bigg).
\end{align}
We observe that we have to restrict the function $\eta_i(c_i,\theta)$ and the reaction rate $R_t$ such that 
\begin{align*}
\sum_i\big(\sigma_iR_t+ \eta_i(c_i,\theta)\big) |u_i|^2\geq 0.
\end{align*}
Thus, we note that the second law of thermodynamics $\Delta\geq 0$ is satisfied as long as $\theta>0$.

In addition, we have shown that the total energy,i.e. the sum of the kinetic energy and internal energy is conserved
\begin{align}\label{internal energy conserved}
\frac{d}{dt}\int_\Omega K(c,u)+ e_1(c,s) dx =\int_\Omega \text{ work }+\text{ heat }dx=0
\end{align}
since we assume that there are no external forces and no heat flux through the boundary.

Moreover, we have that the total entropy is increasing in time, i.e.
\begin{align}\label{entropy increase}
\frac{d}{dt}\int_\Omega s(c,\theta) dx=\int_\Omega s_t+\dv(su)dx=\int_\Omega \dv j +\Delta\geq 0, 
\end{align}
where we assume that there is no entropy flux through the boundary.\\

The above derivation can be summarized in the following general model for the chemical reaction with temperature
\begin{align}
&\partial_t c_i +\dv(c_i u_i)=-\sigma R_t,~~\text{ for } i=A,B,C\\
&\partial_t(c_iu_i)+\dv(c_iu_i\otimes u_i)-\nu \Delta u_i+\eta_i(c_i,\theta)u_i=k^c\nabla c_i\theta\\
&\partial_t s +\dv(\sum_i s_i u_i)=\dv\bigg(\frac{\kappa\nabla \theta}{\theta}\bigg)+\Delta
\end{align}
where
\begin{align}
&R_t=r_1=k_f(c_c,\theta) c_Ac_B-k_r(c_C,\theta)c_C
\intertext{for the general law of mass action or}
&R_t=r_2=k^c\ln\bigg(\frac{c_Ac_B}{c_C}\bigg)+k^\theta\ln\theta-k^c
\intertext{for the linear response theory. In addition, we have that the entropy production rate is given by}
&\Delta= \frac{1}{\theta}\bigg(\sum_i\nu |\nabla u_i|^2+\sum_i\big(\sigma_iR_t+ \eta(c_i,\theta)\big) |u_i|^2 +\sum_i \mu_i\sigma_i R_t+ \frac{\kappa|\nabla \theta|^2}{\theta}\bigg)
\intertext{where the chemical potential is defined as}
&\mu_i=k^c\theta(\ln c_i+1)-k^\theta \theta\ln \theta
\intertext{and the entropy is defined by}
&s=\sum_i s_i= -\sum_i c_i\big( k^c\ln c_i-k^\theta(\ln\theta+1)\big)
\end{align}
After deriving the general model for the reaction-diffusion equation with temperature we consider a simplified version.
To this end, we make several assumptions.
\begin{itemize}
\item First, we assume that the dissipation $\mathcal{D}$ depends only on the velocity $u$ and not on its derivative, i.e. the dissipation we consider is of Darcy type.
\item Second, we assume that Newton,s force law reduces to a force balance between conservative and dissipative forces, i.e. $f_{cons}+f_{diss}=0$.
This yields a Darcy type law for the velocity $\eta(c_i,\theta)u_i=-\nabla P_i$.
\item Finally,as a consequence of the above we assume that we can neglect the influence of the kinetic energy and set it equal to zero. 
Thus the conservation of internal energy holds $\frac{d}{dt}\int_\Omega e_1(\rho,s)dx =0$.
\end{itemize}
Hence, we obtain the reaction-diffusion equation with temperature for a Darcy type velocity.
\begin{align}
&\partial_t c_i +\dv(c_i u_i)=-\sigma R_t,~~\text{ for } i=A,B,C\\
&\eta_i(c_i,\theta)u_i=-k^c\nabla c_i\theta\\\label{entropy eq simple}
&\partial_t s +\dv(\sum_i s_i u_i)=\dv\bigg(\frac{\kappa\nabla \theta}{\theta}\bigg)+\Delta
\end{align}
where
\begin{align*}
&R_t=r_1=k_f(c_c,\theta) c_Ac_B-k_r(c_C,\theta)c_C
\intertext{for the general law of mass action}
&R_t=r_2=k^c\ln\bigg(\frac{c_Ac_B}{c_C}\bigg)+k^\theta\ln\theta-k^c
\intertext{and for the linear response theory. In addition, we have}
&\Delta= \frac{1}{\theta}\bigg(\sum_i\nu |\nabla u_i|^2+\sum_i\big(\sigma_iR_t+ \eta(c_i,\theta)\big) |u_i|^2 +\sum_i \mu_i\sigma_i R_t+ \frac{\kappa|\nabla \theta|^2}{\theta}\bigg)\\
&\mu_i=k^c\theta(\ln c_i+1)-k^\theta \theta\ln \theta\\
&s=\sum_i s_i= -\sum_i c_i\big( k^c\ln c_i-k^\theta(\ln\theta+1)\big)
\end{align*}
This system of equations can be written in a condensed form by eliminating the velocity in the reaction and entropy equation.
Moreover we take a closer look at the temperature.
To this end, we explicitly compute the left-hand side of equation \eqref{entropy eq simple}.
\begin{align}
\partial_t s= -\sum_i \big(k^c \partial_t c_i\ln c_i +k^c\partial_t c_i-k^\theta \partial c_i (\ln \theta +1)-k^\theta c_i\frac{\partial_t \theta}{\theta}\big)
\end{align}
and
\begin{align}
\nonumber \dv(\sum_i s_i u_i)&= -\dv\bigg(\sum_i c_i\big( k^c\ln c_i-k^\theta(\ln\theta+1)\big)u_i\bigg)\\
&=-\sum_i\bigg[k^c\ln c_i\dv\big(c_i u)+k^c u_i\nabla c_i-k^\theta\big(\ln\theta+1 \big)\dv\big(c_iu_i\big)-k^\theta c_iu_i\frac{\nabla \theta}{\theta}\bigg]
\end{align}
Adding these two equations and using the reaction equation for the concentration we obtain
\begin{align*}
\partial_t s+\dv(\sum_i s_i u_i)=&\sum_i\big(k^c \ln c_i +k^c -k^\theta(\ln\theta +1)\big)\sigma_i R_t+\sum_ik^c c_i\dv u_i\\
&+\sum_i k^\theta\frac{c_i}{\theta}\big(\partial_t\theta+u_i\nabla \theta\big)
\end{align*}
Thus multiplying the entropy equation by $\theta$ yields
\begin{align*}
\theta \big(\partial_t s+\dv(\sum_i s_i u_i)\big)=& \sum_i k^\theta c_i(\partial_t\theta +u_i\nabla \theta)+\sum_ik^c c_i\theta \dv u_i\\
& +\sum_i (\mu_i+k^\theta\theta)\sigma_i R_t
\end{align*}
and the temperature equations reads
\begin{align}
\sum_i k^\theta c_i\big(\partial_t\theta +\dv(\theta u_i)\big)=\kappa\Delta\theta +\sum_i\sigma_i k^\theta\theta R_t +\sum_i\big(\nu|\nabla u_i|^2+\eta_i(c_i,\theta)|u_i|^2\big).
\end{align}
This yields the following system of equations for the reaction-diffusion system 
\begin{align}
&\partial_t c_i -k^c\Delta c_i=-\sigma_i R_t+k^c\nabla\cdot\big(c_i\nabla \ln\theta\big),~~\text{ for } i=A,B,C\\\begin{split}
&\sum_i k^\theta c_i\bigg[\partial_t\theta -k^\theta\bigg(\frac{\nabla c_i\cdot\nabla \theta}{c_i}+\frac{|\nabla \theta|^2}{\theta}\bigg)\bigg]=\kappa\Delta\theta +\sum_i\sigma_i k^\theta\theta R_t\\
 &~~~~~~+ (k^c)^2\sum_i \bigg[(\eta_i-1)\frac{|\nabla(c_i\theta)|^2}{c_i\theta}+\Delta(c_i\theta)\bigg]\end{split}
\end{align}
where we have the two different reaction rates derived from the general law of mass action and the linear response theory
\begin{align*}
&R_t=r_1= k_f(c_c,\theta) c_Ac_B-k_r(c_C,\theta)c_C,\\
&R_t=r_2=k^c\ln\bigg(\frac{c_Ac_B}{c_C}\bigg)+k^\theta\ln\theta-k^c.
\end{align*}

\section{Besov Spaces}
In this section we will present the theory behind the well-posedness problem for the reaction-diffusion system with temperature.
In order to so, we introduce the Besov spaces by using the Littlewood-Paley decomposition.
For the details in the Theorems and Definitions presented in this section, we refer to  \cite{Bahouri} and \cite{Sawano}. \\

We first define the building blocks of the theory of Besov spaces, the dyadic partition of unity.
Let $\mathcal{C}$ be the annulus $\{\xi\in \mathbb{R}^d~:~3/4\leq |\xi|\leq 8/3\}$, and let $\phi$ be a radial function with values in the interval $[0,1]$ belonging to the space $\mathcal{D}(\mathcal{C})$ with the following partition of unity
\begin{align*}
\forall \xi \in \mathbb{R}^d\setminus\{0\},~\sum_{j\in \mathbb{Z}}\phi\big(2^{-j}\xi\big)=1.
\end{align*}
We observe that for $|j-i|\geq 2$ we have $
\supp \phi\big(2^{-j}\cdot\big)\cap \supp \phi\big(2^{-i}\cdot\big)=\emptyset$.
In addition, we define the Fourier transform $\mathcal{F}$ of the whole space $\mathbb{R}^d$.
Then we can define the homogeneous dyadic block $\dot{\Delta}_j$ and the homogeneous low-frequency cut-off operator $\dot{S}_j$ for all $j$
\begin{align*}
\dot{\Delta}_j u&=\mathcal{F}^{-1}\big(\phi(2^{-j}\xi)\mathcal{F}u\big)\\
\dot{S}_j u&= \sum_{i\leq j-1} \dot{\Delta}_i u.
\end{align*}
Hence, we can write the formal Littlewood-Paley decomposition
\begin{align*}
\text{Id}=\sum_j \dot{\Delta}_j.
\end{align*}
This allows us to define the homogeneous Besov spaces.
\begin{definition}
The homogeneous Besov spaces $\dot{B}^s_{p,r}$ with $s\in \mathbb{R}$, $p,r\in [1,\infty]^2$ and 
\begin{align*}
s<\frac{d}{2} \text { if } r>1, \quad s \leq \frac{d}{2} \quad \text { if } \quad r=1
\end{align*}
consist of all homogeneous tempered distributions $u$ such that 
\begin{align*}
\|u\|_{\dot{B}^s_{p,r}}:=\bigg(\sum_{j\in\mathbb{Z}}2^{rjs}\|\dot{\Delta}_j u\|_{L^p}^r\bigg)^{1/r}<\infty.
\end{align*}
\end{definition}
 We remark that the (semi-)norms $\|\cdot\|_{\dot{H}^s}$ and $\|\cdot\|_{\dot{B}^s_{2,2,}}$ are equivalent.
Furthermore, we observe that $\dot{H}^s\subset \dot{B}^s_{2,2}$ and equality holds if $s<d/2$.\\
We have have the following remark
\begin{remark}
Let $(s_1,s_2)\in \mathbb{R}^2$ and $1\leq p_1,p_2,r_1,r_2\leq \infty$ with $s<d/p$ or $s=d/p$ if $r=1$. 
Then the space $\dot{B}_{p_1,r_1}^{s_1}\cap \dot{B}_{p_2,r_2}^{s_2}$ is endowed with the norm $\|\cdot\|_{\dot{B}_{p_1,r_1}^{s_1}}+\|\cdot\|_{\dot{B}_{p_2,r_2}^{s_2}}$ is a complete normed space.
\end{remark}
One special feature of homogeneous Besov spaces is there scaling property.
Next, we have some useful embeddings.
\begin{prop}
For $p\in [1,\infty)$ the space $ \dot{B}_{p,1}^{d/p}$ is continuously embedded in the space $C^0$, i.e. the space of continuous functions vanishing at infinity.
\end{prop}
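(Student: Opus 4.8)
The plan is to establish the quantitative bound $\|u\|_{L^\infty}\le C\,\|u\|_{\dot{B}_{p,1}^{d/p}}$ and then to observe that, together with the uniform convergence of the Littlewood--Paley series, it already yields membership in $C^0$ and continuity of the embedding. Since the $\dot{B}_{p,1}^{d/p}$-norm is precisely the $\ell^1(\mathbb{Z})$-sum of the quantities $2^{jd/p}\|\dot{\Delta}_j u\|_{L^p}$, the natural idea is to control each dyadic block in $L^\infty$ by its own $L^p$-norm and then sum.

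The key step is the Bernstein inequality (see \cite{Bahouri}): if $v$ is a tempered distribution with $\mathcal{F}v$ supported in a ball of radius $\lambda$, then $\|v\|_{L^\infty}\lesssim \lambda^{d/p}\|v\|_{L^p}$ for any $p\in[1,\infty]$. Applying this to $v=\dot{\Delta}_j u$, whose spectrum lies in $2^j\mathcal{C}$ and hence in a ball of radius $\sim 2^j$, we get
\begin{align*}
\|\dot{\Delta}_j u\|_{L^\infty}\lesssim 2^{jd/p}\,\|\dot{\Delta}_j u\|_{L^p},\qquad j\in\mathbb{Z}.
\end{align*}
Summing over $j$ and using that the third index is $r=1$ in the definition of the Besov norm,
\begin{align*}
\sum_{j\in\mathbb{Z}}\|\dot{\Delta}_j u\|_{L^\infty}\;\lesssim\; \sum_{j\in\mathbb{Z}}2^{jd/p}\|\dot{\Delta}_j u\|_{L^p}\;=\;\|u\|_{\dot{B}_{p,1}^{d/p}}\;<\;\infty.
\end{align*}

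From here I would pass from the blocks back to $u$. Each $\dot{\Delta}_j u$ is the convolution of $u$ with a rescaled Schwartz function, hence smooth; moreover Bernstein also bounds all its derivatives in $L^p$, so by Sobolev embedding $\dot{\Delta}_j u\in C^0$. The estimate above shows that the partial sums $\sum_{|j|\le N}\dot{\Delta}_j u$ are Cauchy in $L^\infty$; as $C^0$ is a closed subspace of $L^\infty$, their limit is some $\tilde{u}\in C^0$ satisfying $\|\tilde{u}\|_{L^\infty}\le C\|u\|_{\dot{B}_{p,1}^{d/p}}$. Finally I would identify $\tilde{u}$ with the canonical realization of $u$: the series $\sum_j\dot{\Delta}_j u$ represents $u$ in $\mathcal{S}'$ up to a polynomial, and since it converges in $L^\infty\hookrightarrow\mathcal{S}'$ the polynomial ambiguity disappears, so $\tilde{u}=u$. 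This gives the continuous embedding $\dot{B}_{p,1}^{d/p}\hookrightarrow C^0$.

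The main obstacle is the low-frequency behaviour characteristic of homogeneous spaces: one must make sure that $\sum_{j\le 0}\dot{\Delta}_j u$ actually converges, so that $u$ is genuinely a function rather than a distribution defined only modulo polynomials. This is exactly where the choice $r=1$ at the critical regularity $s=d/p$ is essential --- for $r>1$ the sum need not converge --- but it is already contained in the single summation estimate displayed above. A minor secondary point is to verify that the blocks $\dot{\Delta}_j u$ vanish at infinity, so that the correct target is $C^0$ and not merely $L^\infty\cap C$; this again follows from all derivatives of $\dot{\Delta}_j u$ lying in $L^p$ with $p<\infty$ via Sobolev embedding.
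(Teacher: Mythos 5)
Your argument is correct and is exactly the standard proof of this embedding; the paper itself states the proposition without proof, deferring to \cite{Bahouri}, where the argument is precisely your combination of the Bernstein inequality $\|\dot{\Delta}_j u\|_{L^\infty}\lesssim 2^{jd/p}\|\dot{\Delta}_j u\|_{L^p}$ with the $\ell^1$ summation afforded by $r=1$ at the critical regularity $s=d/p$. Your attention to the low-frequency convergence and to the identification of the uniform limit with the canonical realization of $u$ in $\mathcal{S}'$ is the right point to worry about and is handled correctly.
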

\begin{prop}
Let $1\leq p_1\leq p_2\leq \infty$ and let $1\leq r_1\leq r_2\leq \infty$. Then for any $s\in \mathbb{R}$ the space $\dot{B}_{p_1,r_1}^{s} $ is continuously embedded in $\dot{B}_{p_2,r_2}^{s-d(1/p_1-1/p_2)} $.
\end{prop}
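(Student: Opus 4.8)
The plan is to reduce the whole statement to two ingredients: Bernstein's inequality for functions whose Fourier transform is supported in a dyadic annulus, and the elementary inclusion $\ell^{r_1}(\mathbb{Z})\hookrightarrow\ell^{r_2}(\mathbb{Z})$, which holds precisely because $r_1\le r_2$. Neither ingredient uses the structure of the reaction-diffusion system; this is a purely harmonic-analytic fact about the Littlewood--Paley blocks $\dot{\Delta}_j$ introduced above.

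First I would establish (or recall) Bernstein's lemma: if $v$ is a tempered distribution whose Fourier transform is supported in the annulus $2^j\mathcal{C}$, then for $1\le p_1\le p_2\le\infty$ one has $\|v\|_{L^{p_2}}\le C\,2^{jd(1/p_1-1/p_2)}\|v\|_{L^{p_1}}$ with $C$ independent of $j$ and $v$. This follows by fixing a single $\tilde\phi\in\mathcal{D}(\mathbb{R}^d\setminus\{0\})$ equal to $1$ on $\mathcal{C}$, writing $v=\mathcal{F}^{-1}\!\big(\tilde\phi(2^{-j}\cdot)\big)*v$, and applying Young's convolution inequality with exponents satisfying $1+\tfrac1{p_2}=\tfrac1q+\tfrac1{p_1}$ (which forces $q\ge1$, hence requires $p_1\le p_2$). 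The rescaling identity $\mathcal{F}^{-1}\!\big(\tilde\phi(2^{-j}\cdot)\big)(x)=2^{jd}(\mathcal{F}^{-1}\tilde\phi)(2^jx)$ contributes exactly the factor $2^{jd(1-1/q)}=2^{jd(1/p_1-1/p_2)}$.

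Next I would apply this with $v=\dot{\Delta}_j u$, whose spectrum lies in $2^j\mathcal{C}$ by construction, to get $\|\dot{\Delta}_j u\|_{L^{p_2}}\le C\,2^{jd(1/p_1-1/p_2)}\|\dot{\Delta}_j u\|_{L^{p_1}}$ for every $j\in\mathbb{Z}$. Writing $s'=s-d(1/p_1-1/p_2)$ and inserting this into the definition of the target norm gives, scale by scale,
\[
2^{js'}\|\dot{\Delta}_j u\|_{L^{p_2}}\le C\,2^{js'}2^{jd(1/p_1-1/p_2)}\|\dot{\Delta}_j u\|_{L^{p_1}}=C\,2^{js}\|\dot{\Delta}_j u\|_{L^{p_1}},
\]
so the sequence $\big(2^{js'}\|\dot{\Delta}_j u\|_{L^{p_2}}\big)_{j}$ has $\ell^{r_2}$-norm bounded by $C$ times the $\ell^{r_2}$-norm of $\big(2^{js}\|\dot{\Delta}_j u\|_{L^{p_1}}\big)_{j}$. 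Since $r_1\le r_2$ we have $\|\cdot\|_{\ell^{r_2}}\le\|\cdot\|_{\ell^{r_1}}$, and therefore $\|u\|_{\dot{B}^{s'}_{p_2,r_2}}\le C\|u\|_{\dot{B}^{s}_{p_1,r_1}}$, which is the asserted continuous embedding.

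The only genuine obstacle is Bernstein's lemma; everything afterward is bookkeeping with geometric factors $2^{j(\cdot)}$ and monotonicity of $\ell^r$ norms. The point that deserves care is \emph{uniformity in $j$ over all of $\mathbb{Z}$}: this is exactly why one localizes to the dilated annulus $2^j\mathcal{C}$ rather than to balls, so that a single auxiliary bump $\tilde\phi$ and a single constant $C$ serve every dyadic scale at once. A secondary technical caveat, which I would state but not belabor, is that all these manipulations take place within the class of tempered distributions for which the homogeneous Littlewood--Paley series is summable in the sense built into the definition of $\dot{B}^s_{p,r}$ given above.
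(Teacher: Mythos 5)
Your argument is correct and is the standard proof of this embedding: Bernstein's inequality on each dyadic block (via Young's inequality with the rescaled bump $\tilde\phi(2^{-j}\cdot)$, uniform in $j$ because of the annular localization) followed by the monotonicity $\|\cdot\|_{\ell^{r_2}}\le\|\cdot\|_{\ell^{r_1}}$ for $r_1\le r_2$. The paper states this proposition without proof, deferring to the cited references, where exactly this argument is given, so there is nothing to compare beyond noting that your writeup fills in the omitted details correctly.
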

\begin{remark}
From this point on we work with the Besov spaces $ \dot{B}_{2,1}^{s}$ and by the above Proposition we have that it is continuously embedded into $\dot{H}^s$.
\end{remark}

The following product rule is the key in the well-posedness result for the reaction-diffusion system. 
\begin{prop}
Let $u\in \dot{B}_{2,1}^{s_1}$ and let $v\in  \dot{B}_{2,1}^{s_2}$  with $s_1,\,s_2\leq d/2$.
If $s_1+s_2>0$ then the product $uv$ belongs to $ \dot{B}_{2,1}^{s_1+s_2-d/2}$ and the following inequality holds
\begin{align*}
\|uv\|_{ \dot{B}_{2,1}^{s_1+s_2-d/2}}\leq C \|u\|_{ \dot{B}_{2,1}^{s_1}}\|v\|_{ \dot{B}_{2,1}^{s_2}},
\end{align*}
where the constant $C$ depends on $s_1,\, s_2$ and the dimension $d$.
\end{prop}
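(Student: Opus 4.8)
The plan is to use Bony's paraproduct decomposition and to estimate the three resulting pieces separately against the dyadic characterization of the $\dot{B}_{2,1}$ norm. Write $uv = T_uv + T_vu + R(u,v)$, where $T_uv = \sum_{j}\dot{S}_{j-1}u\,\dot{\Delta}_jv$ is the paraproduct and $R(u,v)=\sum_{j}\sum_{|\nu|\leq 1}\dot{\Delta}_ju\,\dot{\Delta}_{j+\nu}v$ is the remainder. Throughout I abbreviate $b_j:=2^{js_1}\|\dot{\Delta}_ju\|_{L^2}$ and $e_j:=2^{js_2}\|\dot{\Delta}_jv\|_{L^2}$, so that $\{b_j\},\{e_j\}$ are nonnegative $\ell^1(\mathbb{Z})$ sequences with $\ell^1$ norms $\|u\|_{\dot{B}_{2,1}^{s_1}}$ and $\|v\|_{\dot{B}_{2,1}^{s_2}}$, and I use Bernstein's inequality $\|\dot{\Delta}_k w\|_{L^\infty}\lesssim 2^{kd/2}\|\dot{\Delta}_k w\|_{L^2}$ freely (this is also consistent with the embedding Proposition taking $p_1=2$, $p_2=\infty$).

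\textbf{Paraproduct terms.} Since $\dot{S}_{j-1}u$ is frequency-localized in a ball of radius $\sim 2^j$ and $\dot{\Delta}_jv$ in an annulus of radius $\sim 2^j$, the summand $\dot{S}_{j-1}u\,\dot{\Delta}_jv$ is spectrally supported in an annulus $\sim 2^j$, hence $\dot{\Delta}_{j'}(T_uv)$ receives contributions only from $|j-j'|\leq N_0$ for a fixed integer $N_0$. Bernstein's inequality together with $s_1\leq d/2$ gives $\|\dot{S}_{j-1}u\|_{L^\infty}\leq \sum_{k\leq j-2}\|\dot{\Delta}_ku\|_{L^\infty}\lesssim \sum_{k\leq j-2}2^{k(d/2-s_1)}b_k \lesssim 2^{j(d/2-s_1)}\|u\|_{\dot{B}_{2,1}^{s_1}}$, where the last step is a geometric summation (and reduces to the bound $\|\dot{S}_{j-1}u\|_{L^\infty}\lesssim\|u\|_{\dot{B}_{2,1}^{d/2}}$ when $s_1=d/2$). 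Consequently $2^{j'(s_1+s_2-d/2)}\|\dot{\Delta}_{j'}(T_uv)\|_{L^2}\lesssim \|u\|_{\dot{B}_{2,1}^{s_1}}\sum_{|j-j'|\leq N_0}2^{(j'-j)(s_1+s_2-d/2)}e_j$; summing over $j'$ and exchanging the (finite, then $\ell^1$) sums yields $\|T_uv\|_{\dot{B}_{2,1}^{s_1+s_2-d/2}}\lesssim\|u\|_{\dot{B}_{2,1}^{s_1}}\|v\|_{\dot{B}_{2,1}^{s_2}}$. The bound for $T_vu$ is identical after swapping the roles of $u$ and $v$ (using $s_2\leq d/2$). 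Note that these two terms do not need $s_1+s_2>0$.

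\textbf{Remainder term.} Here the summand $\dot{\Delta}_ju\,\dot{\Delta}_{j+\nu}v$ is spectrally supported in a \emph{ball} of radius $\sim 2^j$, so $\dot{\Delta}_{j'}$ annihilates it unless $j\geq j'-N_0$. The crucial point is that Bernstein must now be applied at the output frequency $2^{j'}$: $\|\dot{\Delta}_{j'}(\dot{\Delta}_ju\,\dot{\Delta}_{j+\nu}v)\|_{L^2}\lesssim 2^{j'd/2}\|\dot{\Delta}_ju\,\dot{\Delta}_{j+\nu}v\|_{L^1}\lesssim 2^{j'd/2}\|\dot{\Delta}_ju\|_{L^2}\|\dot{\Delta}_{j+\nu}v\|_{L^2}$. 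With $\tilde{e}_j:=\sum_{|\nu|\leq 1}2^{-\nu s_2}e_{j+\nu}$ (again an $\ell^1$ sequence comparable to $\{e_j\}$) this gives $2^{j'(s_1+s_2-d/2)}\|\dot{\Delta}_{j'}R(u,v)\|_{L^2}\lesssim \sum_{j\geq j'-N_0}2^{(j'-j)(s_1+s_2)}b_j\tilde{e}_j$; summing over $j'$ and swapping the order of summation leaves the inner geometric sum $\sum_{m\leq N_0}2^{m(s_1+s_2)}$, which converges \emph{precisely because} $s_1+s_2>0$, yielding $\|R(u,v)\|_{\dot{B}_{2,1}^{s_1+s_2-d/2}}\lesssim \sum_j b_j\tilde{e}_j\lesssim\|u\|_{\dot{B}_{2,1}^{s_1}}\|v\|_{\dot{B}_{2,1}^{s_2}}$.

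\textbf{Conclusion and main obstacle.} Adding the three estimates gives the claimed inequality and in particular shows $uv\in\dot{B}_{2,1}^{s_1+s_2-d/2}$; the convergence of the Bony series in $\mathcal{S}'$ is part of the same bookkeeping. I expect the only delicate point to be the remainder: one must recognize that its pieces are supported in balls rather than annuli, apply Bernstein at the correct (lower) output frequency $2^{j'}$, and observe that this is exactly where the hypothesis $s_1+s_2>0$ enters — by contrast the two paraproduct pieces require only $s_1,s_2\leq d/2$.
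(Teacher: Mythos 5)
Your argument is correct, and it coincides with the standard proof of this product law: the paper itself states the proposition without proof, deferring to the cited references on Littlewood--Paley theory, where exactly this Bony decomposition $uv=T_uv+T_vu+R(u,v)$ is used. You correctly identify the two essential points --- the bound $\|\dot{S}_{j-1}u\|_{L^\infty}\lesssim 2^{j(d/2-s_1)}\|u\|_{\dot{B}_{2,1}^{s_1}}$ for the paraproducts (which is where $s_1,s_2\leq d/2$ enters) and the application of Bernstein at the output frequency for the ball-supported remainder (which is where $s_1+s_2>0$ enters) --- so there is nothing to add.
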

We observe that for $s=d/2$ fixed we obtain an algebra structure for the space $\dot{B}_{2,1}^{d/2}$, i.e.
\begin{align*}
\dot{B}_{2,1}^{d/2}\times \dot{B}_{2,1}^{d/2}\to \dot{B}_{2,1}^{d/2}.
\end{align*}
Next, we define the time-space Besov spaces, where the idea is to bound each dyadic block in $L^q\big([0,T];L^p\big)$ than to estimate directly the solution of the whole partial differential equation in $L^q\big([0,T];\dot{B}_{p,r}^s\big)$.
\begin{definition}
For $T>0$ and $s\in \mathbb{R}$ let $1\leq r,p\leq \infty$ and let the assumptions of Definition 3.1 hold.
Then we set
\begin{align*}
\|u\|_{\mathcal{L}^q_T\big(\dot{B}_{p,r}^s\big)}=\bigg(\sum_{j\in\mathbb{Z}} 2^{rjs}\|\dot{\Delta}_j u\|_{L_T^q\big( L^{p} \big)}\bigg)^{1/r}.
\end{align*}
\end{definition}
The spaces $\mathcal{L}^q_T\big(\dot{B}_{p,r}^s\big)$ can be linked with the more classical spaces $L^q\big([0,T];\dot{B}_{p,r}^s\big)$ via the Minkowski inequality and we obtain
\begin{align*}
\|u\|_{\mathcal{L}^q_T\big(\dot{B}_{p,r}^s\big)}\leq \|u\|_{L^q\big([0,T];\dot{B}_{p,r}^s\big)}~~\text{if } r\geq p,\\
\intertext{and}
\|u\|_{\mathcal{L}^q_T\big(\dot{B}_{p,r}^s\big)}\leq \|u\|_{L^q\big([0,T];\dot{B}_{p,r}^s\big)}~~\text{if } r\leq p.
\end{align*}
\begin{remark}
The general principles is that all properties of continuity of the product, composition, etc. remain true in these time-space Besov spaces too.
The exponent $q$ just has to behave according to H\"older's inequality for the time variable. 
\end{remark}
The following result is the key in the existence proof later on.
\begin{theorem}\label{Besov Theorem}
Let $u_0\in \dot{B}_{2,1}^{s}$ be the initial data with regularity $s\leq d/2$.
In addition, let $f\in \mathcal{L}^1_T\big(\dot{B}_{2,1}^{s}\big)$ be the driving force, and we denote by $u$ the unique solution to the following linear parabolic PDE
\begin{align}
&\partial_t u-\Lambda u=f~~~\text{in } \mathbb{R}_+\times \mathbb{R}^d,\\
&u\big|_{t=0}=u_0~~~\text{in } \mathbb{R}^d,
\end{align}
where $\Lambda$ is a linear second order strongly elliptic operator.
Then the solution $u$ belongs to the space $\mathcal{L}_T^\infty\big(\dot{B}_{2,1}^{s}\big)$ and the pair $\big(\partial_t u,\Delta u\big)$ to $\mathcal{L}_T^1\big(\dot{B}_{2,1}^{s}\big)$.
Furthermore the following inequality holds
\begin{align*}
\|u\|_{\mathcal{L}_T^\infty(\dot{B}_{2,1}^{s})}+\|\partial_t u\|_{\mathcal{L}_T^1(\dot{B}_{2,1}^{s})}+\|\Delta u\|_{\mathcal{L}_T^1(\dot{B}_{2,1}^{s})}\leq C\big[\|u_0\|_{\dot{B}_{2,1}^{s}}+\|f\|_{\mathcal{L}_T^1(\dot{B}_{2,1}^{s})}\big].
\end{align*} 
\end{theorem}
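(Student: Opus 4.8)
The plan is to reduce the maximal‑regularity estimate to a family of frequency‑localized energy inequalities, one for each Littlewood–Paley block, and then sum. First I would apply the homogeneous dyadic block $\dot{\Delta}_j$ to the equation. Since $\Lambda$ is a second order strongly elliptic operator (in the application of the next section $\Lambda=k^c\Delta$ is constant‑coefficient, so $\dot{\Delta}_j$ commutes with it exactly; in the variable‑coefficient case up to commutator terms that are of lower order and absorbed via the product rule of the previous section), the function $v_j:=\dot{\Delta}_j u$ solves $\partial_t v_j-\Lambda v_j=\dot{\Delta}_j f$ with data $\dot{\Delta}_j u_0$. The decisive structural fact is that $v_j$ is spectrally supported in the annulus $2^j\mathcal{C}$, which is bounded away from the origin: Bernstein's inequality then gives simultaneously the upper bound $\|\Delta v_j\|_{L^2}\le C2^{2j}\|v_j\|_{L^2}$ and, from strong ellipticity, the \emph{lower} coercivity bound $-\int(\Lambda v_j)\cdot v_j\,dx\ge c\lambda 2^{2j}\|v_j\|_{L^2}^2$ with $c,\lambda>0$ depending only on the ellipticity constant and $d$. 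This is precisely why the homogeneous (annular‑support) scale, rather than the inhomogeneous one, is the natural setting.

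Second, I would run the standard $L^2$ energy argument on $v_j$: testing the localized equation against $v_j$ and integrating yields $\tfrac12\frac{d}{dt}\|v_j\|_{L^2}^2+c\lambda 2^{2j}\|v_j\|_{L^2}^2\le\|\dot{\Delta}_j f\|_{L^2}\|v_j\|_{L^2}$. Dividing by $\|v_j\|_{L^2}$ — made rigorous by replacing it with $(\|v_j\|_{L^2}^2+\varepsilon^2)^{1/2}$ and letting $\varepsilon\to0$ — gives $\frac{d}{dt}\|v_j\|_{L^2}+c\lambda 2^{2j}\|v_j\|_{L^2}\le\|\dot{\Delta}_j f\|_{L^2}$. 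Grönwall with integrating factor $e^{c\lambda 2^{2j}t}$ then produces the Duhamel‑type bound $\|v_j(t)\|_{L^2}\le e^{-c\lambda 2^{2j}t}\|\dot{\Delta}_j u_0\|_{L^2}+\int_0^t e^{-c\lambda 2^{2j}(t-\tau)}\|\dot{\Delta}_j f(\tau)\|_{L^2}\,d\tau$. Taking the supremum over $t\in[0,T]$ bounds $\|v_j\|_{L_T^\infty(L^2)}$ by $\|\dot{\Delta}_j u_0\|_{L^2}+\|\dot{\Delta}_j f\|_{L_T^1(L^2)}$; integrating in time over $[0,T]$ and using Young's convolution inequality on the Duhamel term bounds $2^{2j}\|v_j\|_{L_T^1(L^2)}$ by the same quantity, since the factor $2^{2j}$ from the Laplacian is exactly compensated by the $2^{-2j}$ from integrating the exponential (this is the quantitative parabolic smoothing).

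Third, I would multiply these two inequalities by $2^{js}$ and sum over $j\in\mathbb{Z}$. By the definitions of the spaces $\mathcal{L}_T^\infty\big(\dot{B}_{2,1}^{s}\big)$ and $\mathcal{L}_T^1\big(\dot{B}_{2,1}^{s}\big)$ the left‑hand sides reassemble into $\|u\|_{\mathcal{L}_T^\infty(\dot{B}_{2,1}^{s})}$ and $\|\Delta u\|_{\mathcal{L}_T^1(\dot{B}_{2,1}^{s})}$, while the right‑hand sides sum to $C\big(\|u_0\|_{\dot{B}_{2,1}^{s}}+\|f\|_{\mathcal{L}_T^1(\dot{B}_{2,1}^{s})}\big)$; the $\ell^1$ index $r=1$ is what makes this summation lossless. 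The estimate for $\partial_t u$ then follows from the equation itself, $\partial_t u=\Lambda u+f$, together with $\|\Lambda u\|_{\mathcal{L}_T^1(\dot{B}_{2,1}^{s})}\lesssim\|\Delta u\|_{\mathcal{L}_T^1(\dot{B}_{2,1}^{s})}$ (plus lower‑order contributions controlled by the product rule in the variable‑coefficient case). Uniqueness is then immediate: the difference of two solutions solves the homogeneous problem with zero data, and the $\varepsilon\to0$ form of the energy inequality above forces every $v_j$, hence $u$, to vanish.

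The hard part is not the final bookkeeping but the frequency‑localized smoothing step: one must verify carefully that $\dot{\Delta}_j$ commutes with $\Lambda$ (trivial for $\Lambda=k^c\Delta$, otherwise needing genuine commutator estimates), justify the division by the possibly‑vanishing $\|v_j\|_{L^2}$, and — most importantly — keep track that the two‑derivative gain occurs precisely in the $L_T^1$ (not $L_T^\infty$) norm, which is the content of the exponential‑decay integration. The role of the annular support of $\dot{\Delta}_j$ in securing the lower Bernstein bound, and hence the uniform‑in‑$j$ decay rate $c\lambda 2^{2j}$, is the single feature on which the whole argument rests.
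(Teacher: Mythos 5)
Your argument is correct and is precisely the standard proof of this maximal-regularity estimate: frequency localization by $\dot{\Delta}_j$, the lower Bernstein/ellipticity bound $-\int(\Lambda v_j)v_j\,dx\gtrsim 2^{2j}\|v_j\|_{L^2}^2$ on the annulus, the Gr\"onwall--Duhamel bound on each block, and $\ell^1$ summation after weighting by $2^{js}$, with $\partial_t u$ recovered from the equation. The paper does not prove this theorem itself but cites it from its references on Besov spaces, where exactly this Littlewood--Paley energy argument is carried out (for constant-coefficient $\Lambda$, which is the only case actually used later, so your caveat about commutators in the variable-coefficient case is appropriate but not needed for the application).
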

In addition, the following Corollary is used frequently in the later part.
\begin{corollary}
Let $1\leq q,r\leq \infty$, $2\leq p<\infty$ and $s\in\mathbb{R}$ and let $I=[0,T)$ for any $T>0$.
Suppose is a solution to the system ().
Then there exists a constant $C>0$ depending on $q,p,r,n$ such that
\begin{align*}
\|u\|_{\mathcal{L}^q_T\big(\dot{B}_{p,r}^{s+2/q}\big)}\leq C\big(\|u\|_{\dot{B}_{p,r}^s}+\|f\|_{\mathcal{L}^1_T\big(\dot{B}_{p,r}^s\big)}
\end{align*}
for $0<T\leq \infty$.
\end{corollary}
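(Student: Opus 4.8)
The plan is to reduce the inequality to a single Littlewood--Paley block and to exploit the parabolic smoothing of the semigroup $e^{t\Lambda}$ while keeping careful track of the time exponent $q$; this refines the argument behind Theorem~\ref{Besov Theorem}, which corresponds to the endpoints $q=\infty$ and $q=1$. Throughout, $u_0:=u|_{t=0}$ denotes the initial data and the equation is $\partial_t u-\Lambda u=f$ as in the linear parabolic system of Theorem~\ref{Besov Theorem}. Applying $\dot{\Delta}_j$ and writing the solution by Duhamel's formula,
\begin{align*}
\dot{\Delta}_j u(t)=e^{t\Lambda}\dot{\Delta}_j u_0+\int_0^t e^{(t-\tau)\Lambda}\dot{\Delta}_j f(\tau)\,d\tau,
\end{align*}
where $\dot{\Delta}_j$ commutes with $\Lambda$ up to a lower-order commutator if $\Lambda$ has variable coefficients, the whole estimate hinges on the frequency-localized decay bound
\begin{align*}
\big\|e^{t\Lambda}\dot{\Delta}_j g\big\|_{L^p}\leq C\,e^{-\kappa t\,2^{2j}}\big\|\dot{\Delta}_j g\big\|_{L^p},\qquad t\geq 0,
\end{align*}
with $C,\kappa>0$ independent of $j$, of $g$ and of $p\in[1,\infty]$.

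The argument then runs as follows. First I would prove the displayed decay estimate: strong ellipticity of $\Lambda$ gives $\mathrm{Re}\,\widehat{\Lambda}(\xi)\leq-\kappa|\xi|^2$ on the Fourier side, hence $\mathrm{Re}\,\widehat{\Lambda}(\xi)\leq-\kappa'2^{2j}$ on the support of $\phi(2^{-j}\cdot)$, and a Mikhlin--H\"ormander/Bernstein estimate for the localized multiplier $\xi\mapsto e^{t\widehat{\Lambda}(\xi)}\phi(2^{-j}\xi)$ converts this into the claimed $L^p$ bound (for $\Lambda=\Delta$ this is just the Gaussian kernel rescaled to the annulus $2^j\mathcal{C}$). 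Second, I would take the $L^p$ norm in space of Duhamel's formula and then the $L^q(0,T)$ norm in time, applying Young's convolution inequality in $t$ together with
\begin{align*}
\big\|e^{-\kappa t\,2^{2j}}\big\|_{L^q(0,T)}\leq\big\|e^{-\kappa t\,2^{2j}}\big\|_{L^q(0,\infty)}=(\kappa q)^{-1/q}\,2^{-2j/q};
\end{align*}
the homogeneous term is then controlled by $C\,2^{-2j/q}\|\dot{\Delta}_j u_0\|_{L^p}$, and the Duhamel term, a time-convolution of the $L^q$-function $e^{-\kappa\,\cdot\,2^{2j}}$ with the $L^1$-function $\tau\mapsto\|\dot{\Delta}_j f(\tau)\|_{L^p}$, by $C\,2^{-2j/q}\|\dot{\Delta}_j f\|_{L^1_T(L^p)}$, with all constants uniform in $T\in(0,\infty]$. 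Third, multiplying the resulting block inequality by $2^{j(s+2/q)}$, the gained factor $2^{-2j/q}$ cancels the extra $2^{2j/q}$, so that $2^{j(s+2/q)}\|\dot{\Delta}_j u\|_{L^q_T(L^p)}\leq C\big(2^{js}\|\dot{\Delta}_j u_0\|_{L^p}+2^{js}\|\dot{\Delta}_j f\|_{L^1_T(L^p)}\big)$; taking the $\ell^r(\mathbb{Z})$ norm in $j$ and using the triangle inequality yields $\|u\|_{\mathcal{L}^q_T(\dot{B}^{s+2/q}_{p,r})}\leq C\big(\|u_0\|_{\dot{B}^s_{p,r}}+\|f\|_{\mathcal{L}^1_T(\dot{B}^s_{p,r})}\big)$, which is the assertion; the cases $q=\infty$ and $q=1$ reproduce the $\mathcal{L}^\infty_T$ and $\Delta u\in\mathcal{L}^1_T$ parts of Theorem~\ref{Besov Theorem}.

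The main obstacle is the first step: establishing the uniform exponential decay of the frequency-localized semigroup with the sharp rate $2^{2j}$. For $\Lambda=\Delta$ it is immediate, but for a general strongly elliptic operator one must either assume constant (or smooth, slowly varying) coefficients and carry out the multiplier estimate annulus by annulus, or, for genuinely variable coefficients, bound the commutator $[\dot{\Delta}_j,\Lambda]$ and absorb it as a lower-order perturbation, which requires an a priori energy or duality estimate to close. Once this localized smoothing bound is available, the remaining manipulations --- Young's inequality in time and $\ell^r$-summation --- are routine. Alternatively, the estimate follows by real interpolation between the $q=\infty$ and $q=1$ endpoints of Theorem~\ref{Besov Theorem}, but only after those endpoints have been re-derived for general $(p,r)$, so the direct Littlewood--Paley route is the more economical one; the restriction $p\geq2$ is used nowhere beyond ensuring consistency with the $L^2$-based spaces used in the rest of the paper.
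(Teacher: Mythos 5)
Your proof is correct and is precisely the standard argument that the paper implicitly relies on: the paper states this corollary without proof, deferring to the cited references (Bahouri et al., Sawano), where the result is established exactly by your route --- frequency-localized Duhamel formula, the uniform decay bound $\|e^{t\Lambda}\dot{\Delta}_j g\|_{L^p}\lesssim e^{-\kappa t 2^{2j}}\|\dot{\Delta}_j g\|_{L^p}$, Young's inequality in time, and $\ell^r$ summation after multiplying by $2^{j(s+2/q)}$. Your added caveats (constant versus variable coefficients of $\Lambda$, the irrelevance of the restriction $p\geq 2$, and the correction of $\|u\|_{\dot{B}^s_{p,r}}$ to $\|u_0\|_{\dot{B}^s_{p,r}}$ on the right-hand side) are all accurate.
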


The following result considers the action of smooth functions on the Besov space $\dot{B}_{2,1}^{d/2}$.
\begin{lemma}\label{Besov Lemma}
Let $f$ be a smooth function on $\mathbb{R}$ which vanishes at $0$.
Then for any function $u\in \dot{B}_{2,1}^{d/2}$ the function $f(u)$ is still element of $\dot{B}_{2,1}^{d/2}$ and the following inequality holds
\begin{align*}
\|f(u)\|_{\dot{B}_{2,1}^{d/2}}\leq Q\big(f, \|u\|_{L^\infty}\big)\|u\|_{\dot{B}_{2,1}^{d/2}},
\end{align*}
where $Q$ is a smooth function depending on the value of $f$ and its derivative.
\end{lemma}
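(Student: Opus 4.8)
The plan is to prove this by a Littlewood--Paley telescoping argument combined with Bernstein's inequality, in the spirit of the classical composition estimates in Besov spaces. First observe that $u\in\dot B^{d/2}_{2,1}$ is bounded (in fact continuous and vanishing at infinity) by the embedding $\dot B^{d/2}_{2,1}\hookrightarrow C^0$ of Proposition~3.5, so $\|u\|_{L^\infty}$ is finite and every quantity $\sup_{|y|\le C\|u\|_{L^\infty}}|f^{(k)}(y)|$ is finite; these are the quantities through which $Q$ will depend on $f$.

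The starting point is the telescoping identity
\[
f(u)=\sum_{q\in\mathbb Z}\big(f(\dot S_{q+1}u)-f(\dot S_q u)\big)=\sum_{q\in\mathbb Z}m_q\,\dot\Delta_q u,\qquad m_q:=\int_0^1 f'\big(\dot S_q u+\tau\,\dot\Delta_q u\big)\,d\tau,
\]
where I use $\dot S_{q+1}u=\dot S_q u+\dot\Delta_q u$ and the fundamental theorem of calculus. The identity holds in $L^\infty$, hence as tempered distributions: since $\dot S_q u\to 0$ in $L^\infty$ as $q\to-\infty$, $\dot S_q u\to u$ in $\dot B^{d/2}_{2,1}\hookrightarrow L^\infty$ as $q\to+\infty$, and $f$ is Lipschitz on $[-C\|u\|_{L^\infty},C\|u\|_{L^\infty}]$ with $f(0)=0$, the partial sums $f(\dot S_{Q_2+1}u)-f(\dot S_{Q_1}u)$ converge to $f(u)$ as $Q_1\to-\infty$, $Q_2\to+\infty$. (Here $f(0)=0$ is exactly what forces $f(u)$ to vanish at infinity, consistently with membership in the homogeneous space.) Applying $\dot\Delta_j$ to the series, everything reduces to estimating $\sum_q\dot\Delta_j(m_q\dot\Delta_q u)$.

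I would split this sum into a part with $q\ge j-N_0$ and a part with $q<j-N_0$, where $N_0$ depends only on the annulus $\mathcal C$. For the first part I only use $\|\dot\Delta_j(m_q\dot\Delta_q u)\|_{L^2}\le\|m_q\|_{L^\infty}\|\dot\Delta_q u\|_{L^2}$ together with $\|m_q\|_{L^\infty}\le\sup_{|y|\le C\|u\|_{L^\infty}}|f'(y)|$; multiplying by $2^{jd/2}$, summing in $j$, interchanging the sums and using $d/2>0$ to sum $\sum_{j\le q+N_0}2^{jd/2}\lesssim 2^{qd/2}$ yields a bound by $\big(\sup|f'|\big)\,\|u\|_{\dot B^{d/2}_{2,1}}$. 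For the second part the key is the decay estimate
\[
\|\dot\Delta_j(m_q\dot\Delta_q u)\|_{L^2}\le C\,2^{-M(j-q)}\,Q_M\!\big(f,\|u\|_{L^\infty}\big)\,\|\dot\Delta_q u\|_{L^2}\qquad(j>q+N_0),
\]
valid for any fixed integer $M$. This follows from the cancellation of all polynomial moments of the Littlewood--Paley kernel (so that $\dot\Delta_j g$ may be written against the $M$-th order Taylor remainder of $g=m_q\dot\Delta_q u$, producing the gain $2^{-Mj}$), Bernstein's inequality, and the bounds $\|\nabla^k m_q\|_{L^\infty}\le Q_k(f,\|u\|_{L^\infty})\,2^{kq}$, which come from differentiating the integral defining $m_q$ and from $\|\nabla^k\dot S_q u\|_{L^\infty}+\|\nabla^k\dot\Delta_q u\|_{L^\infty}\lesssim 2^{kq}\|u\|_{L^\infty}$ (convolution with rescaled kernels and Bernstein). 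Choosing $M>d/2$ and interchanging sums again, the second part is bounded by $Q_M(f,\|u\|_{L^\infty})\|u\|_{\dot B^{d/2}_{2,1}}$. Adding the two contributions gives the inequality with $Q$ a finite combination of $\sup_{|y|\le C\|u\|_{L^\infty}}|f^{(k)}(y)|$, $1\le k\le M+1$, times powers of $\|u\|_{L^\infty}$.

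The main obstacle is precisely the decay estimate for the low-frequency part: one must quantify how fast the high frequencies of the composite $m_q\dot\Delta_q u$ decay, and this is exactly where smoothness of $f$ (not just Lipschitz continuity) and the amplitude bound $\|u\|_{L^\infty}$ are used. Note that the naive alternative of Taylor-expanding $f(u)=\sum_{k\ge1}\tfrac{f^{(k)}(0)}{k!}u^k$ and invoking the algebra property of $\dot B^{d/2}_{2,1}$ does not work here: $f$ is merely smooth, and, more importantly, such an argument would produce a bound in terms of $\|u\|_{\dot B^{d/2}_{2,1}}$ rather than $\|u\|_{L^\infty}$. Controlling the $L^2$ norm of the Taylor remainder cleanly — e.g. by dominating it with a Hardy--Littlewood maximal function, or by iterating a first-order commutator-type estimate — is the only genuinely delicate point; everything else is bookkeeping with geometric series.
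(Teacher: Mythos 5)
Your argument is correct and is precisely the classical Meyer ``first linearization'' proof (telescoping $f(u)=\sum_q m_q\dot\Delta_q u$, a crude $L^\infty$--$L^2$ bound for the quasi-diagonal/high-frequency input part, and moment cancellation plus Bernstein for the part where the output frequency dominates), which is the proof given in the reference the paper cites for this lemma; the paper itself supplies no proof. The only remark worth making is that your bookkeeping correctly shows $Q$ must involve derivatives of $f$ up to order roughly $d/2+1$ (not just $f'$), which is slightly more precise than the paper's statement.
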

The above Lemma can also be applied to a product of two functions in the following way.
\begin{corollary}
Let $u\in \dot{B}_{2,1}^{d/2}$ and $v\in \dot{B}_{2,1}^s$ such that the product is continuous in $\dot{B}_{2,1}^{d/2}\times	\dot{B}_{2,1}^s\to \dot{B}_{2,1}^s$.
Let$f$ be a smooth function on $\mathbb{R}$, then $f(u)v\in \dot{B}_{2,1}^s$ and the following inequality holds
\begin{align*}
\|f(u)v\|_{\dot{B}_{2,1}^s}\lesssim Q\big(f,\|u\|_{L^\infty}\big)\|u\|_{\dot{B}_{2,1}^{d/2}}\|v\|_{\dot{B}_{2,1}^s}.
\end{align*}
\end{corollary}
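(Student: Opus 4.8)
The plan is to reduce the statement to the situation already covered by Lemma~\ref{Besov Lemma} and then compose it with the product continuity that is assumed in the hypothesis. First observe that since $u\in\dot{B}_{2,1}^{d/2}$, the embedding $\dot{B}_{2,1}^{d/2}\hookrightarrow C^0$ (the Proposition on $\dot{B}_{p,1}^{d/p}\hookrightarrow C^0$, applied with $p=2$) guarantees $u\in L^\infty$, so that $f(u)$ is a well-defined bounded function and the quantity $Q(f,\|u\|_{L^\infty})$ is finite and meaningful. Next, since Lemma~\ref{Besov Lemma} is stated only for functions vanishing at $0$, I would split $f=f(0)+g$ with $g:=f-f(0)$, which is smooth and satisfies $g(0)=0$. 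Then $f(u)v=f(0)\,v+g(u)\,v$ and it suffices to bound each term separately.

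For the main term $g(u)v$, Lemma~\ref{Besov Lemma} applies directly to $g$ and gives $g(u)\in\dot{B}_{2,1}^{d/2}$ with
\[
\|g(u)\|_{\dot{B}_{2,1}^{d/2}}\le Q\big(g,\|u\|_{L^\infty}\big)\,\|u\|_{\dot{B}_{2,1}^{d/2}}.
\]
Invoking the assumed boundedness of the product map $\dot{B}_{2,1}^{d/2}\times\dot{B}_{2,1}^{s}\to\dot{B}_{2,1}^{s}$ (which follows from the product Proposition with $s_1=d/2$, $s_2=s$ whenever $-d/2<s\le d/2$) then yields
\[
\|g(u)v\|_{\dot{B}_{2,1}^{s}}\lesssim\|g(u)\|_{\dot{B}_{2,1}^{d/2}}\,\|v\|_{\dot{B}_{2,1}^{s}}\lesssim Q\big(g,\|u\|_{L^\infty}\big)\,\|u\|_{\dot{B}_{2,1}^{d/2}}\,\|v\|_{\dot{B}_{2,1}^{s}}.
\]
The remaining term is trivial, $\|f(0)\,v\|_{\dot{B}_{2,1}^{s}}=|f(0)|\,\|v\|_{\dot{B}_{2,1}^{s}}$, and in every application in Section~4 the relevant $f$ vanishes at $0$, so this term is absent and one recovers exactly the asserted inequality after absorbing the universal constant from the product map into the smooth envelope $Q(f,\|u\|_{L^\infty})$; for a genuinely general $f$ the right-hand side should be read as $Q(f,\|u\|_{L^\infty})\,(1+\|u\|_{\dot{B}_{2,1}^{d/2}})\,\|v\|_{\dot{B}_{2,1}^{s}}$, since the pure $\|v\|_{\dot{B}_{2,1}^{s}}$ contribution cannot be dominated by $\|u\|_{\dot{B}_{2,1}^{d/2}}\|v\|_{\dot{B}_{2,1}^{s}}$ when $u$ is small.

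There is no substantial obstacle in this argument; it is a direct chaining of Lemma~\ref{Besov Lemma} with the product continuity hypothesis. The only two points that require care are (i) verifying that $f(u)$ is well defined and $\|u\|_{L^\infty}<\infty$, which is precisely the content of the embedding $\dot{B}_{2,1}^{d/2}\hookrightarrow C^0$, and (ii) the normalization $f\mapsto f-f(0)$ so that Lemma~\ref{Besov Lemma} becomes applicable. All constants are tracked through the smooth function $Q$, whose dependence on $g=f-f(0)$ is rewritten as a dependence on $f$ in the final statement.
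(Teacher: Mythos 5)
Your argument is exactly the intended one: the paper states this corollary without proof, as an immediate consequence of Lemma \ref{Besov Lemma} combined with the product continuity $\dot{B}_{2,1}^{d/2}\times\dot{B}_{2,1}^{s}\to\dot{B}_{2,1}^{s}$, which is precisely the chaining you carry out. Your additional observation is correct and worth noting: as literally stated the inequality fails for a general smooth $f$ with $f(0)\neq 0$ (take $f\equiv 1$ and $u$ small), so either one must assume $f(0)=0$ as in the Lemma, or the right-hand side must carry the extra term $|f(0)|\,\|v\|_{\dot{B}_{2,1}^{s}}$; in all applications in Section 4 the relevant $f$ does vanish at the origin, so this is a harmless imprecision in the paper's statement rather than a gap in your proof.
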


\section{Well-Posedness Result}
Now, we have all the necessary tools together to show the existence of solutions.
We recall the Darcy-type model for which we introduce perturbations close to equilibrium, where we set $c_i(t,x)$ for $i=A,B,C$ to be the concentration of the i-th species and $\theta(t,x)$ the temperature of the system for $(t,x)\in [0,T]\times \mathbb{R}^d$ for $d=2,3$.
The system then reads 
\begin{align}\label{R-D system 1}
&\partial_t c_i -k^c\Delta c_i=-\sigma_i R_t+k^c\nabla\cdot\big(c_i\nabla \ln\theta\big),~~\text{ for } i=A,B,C\\\label{R-D system 2}\begin{split}
&\sum_i k^\theta c_i\bigg[\partial_t\theta -k^\theta\bigg(\frac{\nabla c_i\cdot\nabla \theta}{c_i}+\frac{|\nabla \theta|^2}{\theta}\bigg)\bigg]=\kappa\Delta\theta +\sum_i\sigma_i k^\theta\theta R_t\\
 &~~~~+ (k^c)^2\sum_i \bigg[(\eta_i-1)\frac{|\nabla(c_i\theta)|^2}{c_i\theta}+\Delta(c_i\theta)\bigg]\end{split}
\end{align}
with 
\begin{align*}
R_t= k^c\ln\bigg(\frac{c_Ac_B}{c_C}\bigg)-k^\theta\ln\theta+k^c
\end{align*} 
\begin{remark}
The equilibrium state is defined such that $R_t(\tilde{c}_A,\tilde{c}_B,\tilde{c}_C,\tilde{\theta})=0$, where we observe that if $(\tilde{c}_A,\tilde{c}_B,\tilde{c}_C,\tilde{\theta})$ is at equilibrium then $(\lambda\tilde{c}_A,\lambda\tilde{c}_B,\lambda\tilde{c}_C,\lambda^{k^c/k^\theta}\tilde{\theta})$ is also an equilibrium state.
Thus, we can assume that without loss of generality $\tilde{c}_i\geq 1/h^2$ for $i=A,B,C$ and $\tilde{\theta}\geq 1/h^2$ for any $0<h<1$.
\end{remark}
Next, we rewrite the system as perturbation to the equilibrium state $(\tilde{c}_A,\tilde{c}_B,\tilde{c}_C,\tilde{\theta})$ by setting
\begin{align*}
c_i=\tilde{c}_i+z_i~~\text{for } i=A,B,C~~\text{and } \theta=\tilde{\theta}+\omega.
\end{align*}
In the nest step we linearize the reaction rate $R_t$ by doing a first order Taylor expansion around the equilibrium state $R_t=0$ and obtain
\begin{align*}
R_t=r=k^c\sum_{j}\sigma_j\frac{z_j}{\tilde{c}_j}-k^\theta\frac{\omega}{\tilde{\theta}}
\end{align*}
The perturbed system now reads
\begin{align}\label{Perturbed system 1}\begin{split}
&\partial_t z_i -k^c\Delta z_i=-\sigma_i\bigg[ k^c\sum_{j}\sigma_j\frac{z_j}{\tilde{c}_j}-k^\theta\frac{\omega}{\tilde{\theta}}\bigg]\\
& ~~~~~~+k^c\bigg[\nabla z_i^k\cdot \frac{\nabla \omega}{\omega+\tilde{\theta}}+z_i\frac{\Delta \omega}{\omega+\tilde{\theta}}-(z_i+\tilde{c}_i)\frac{|\nabla \omega|^2}{(\omega+\tilde{\theta})^2}\bigg],\end{split}
\intertext{ for $i=A,B,C$}\label{Perturbed system 2}\begin{split}
&\sum_i k^\theta (z_i+\tilde{c}_i)\bigg[\partial_t\omega -k^\theta\bigg(\frac{\nabla z_i\cdot\nabla \omega}{z_i+\tilde{c}_i}+\frac{|\nabla \omega|^2}{\omega+\tilde{\theta}}\bigg)\bigg]=\kappa\Delta\omega\\
&~~~ +\sum_i\sigma_i k^\theta(\omega+\tilde{\theta})\big(k^c\sum_{j}\sigma_j\frac{z_j}{\tilde{c}_j}-k^\theta\frac{\omega}{\tilde{\theta}}\big) \\
 &~~~+ (k^c)^2\sum_i \bigg[(\eta_i-1)\frac{|(z+_i+\tilde{c}_i)\nabla \omega+(\omega+\tilde{\theta})\nabla z_i|^2}{(z_i+\tilde{c}_i)(\omega+\tilde{\theta})}\bigg]\\
& ~~~+(k^c)^2\sum_i \bigg[(z_i+\tilde{c}_i)\Delta\omega+\nabla z_i\cdot\nabla\omega+\omega \Delta z_i\bigg]\end{split}
\end{align}
\begin{remark}
We note that we modified the concentration equation and temperature equation slightly by subtracting the term $\tilde{c}_i \Delta \omega$ and $\tilde{\theta}\Delta z_i$ respectively.
This regularization of the equations ensures that for constant concentration or constant temperature, i.e. the perturbation of the concentration $z_i=0$ and perturbation of the temperature $\omega=0$, we obtain that the perturbation in the state variables goes to zero, and thus the system returns to equilibrium.
\end{remark}
We now state the well-posed result for the reaction-diffusion system with temperature
\begin{theorem}[Well-Posedness for the R-D System with Temperature]\label{main Theorem}~\\
Let there be a small positive number $h>0$ and let the initial data satisfy the following condition 
\[c_{i,0}-\tilde{c}_i=z_{i,0}\in \dot{B}_{2,1}^{d/2}~\text{ for }~~i=A,B,C~~\text{ and }~~\theta_0-\tilde{\theta}=\omega_0\in \dot{B}_{2,1}^{d/2}\]
 and let the initial data fulfill the smallness condition
\begin{align}
\sum_i \|z_{i,0}\|_{\dot{B}_{2,1}^{d/2}}+\|\omega_0\|_{\dot{B}_{2,1}^{d/2}}\leq h^4.
\end{align}
Then the reaction-diffusion system with temperature close to equilibrium admits a unique global-in-time strong solution belonging to the following function spaces
\begin{align}
c_i-\tilde{c}_i&=:z_i\in \mathcal{L}_T^\infty\big(\dot{B}_{2,1}^{d/2}\big)~~\text{and }~~ \partial_t c_i,\Delta c_i\in \mathcal{L}_T^1\big(\dot{B}_{2,1}^{d/2}\big)~~ \text{for } i=A,B,C\\
\theta-\tilde{\theta}&=:\omega\in \mathcal{L}_T^\infty\big(\dot{B}_{2,1}^{d/2}\big)~~\text{and }~~ \partial_t \theta,\Delta \theta\in \mathcal{L}_T^1\big(\dot{B}_{2,1}^{d/2}\big).
\end{align}
In addition, the solution satisfies the following the inequality
\begin{align}
\sum_i\|c_i-\tilde{c}_i\|_{\mathcal{B}}+\|\theta-\tilde{\theta}\|_{\mathcal{B}}\leq h^2,
\end{align}
where we define the space $\mathcal{B}$ is defined as follows
\begin{align}\label{B norm def}
\|u\|_{\mathcal{B}}:=\|u\|_{ \mathcal{L}_T^\infty\big(\dot{B}_{2,1}^{d/2}\big)}+\|\partial_t u\|_{ \mathcal{L}_T^1\big(\dot{B}_{2,1}^{d/2}\big)}+\|\Delta u\|_{ \mathcal{L}_T^1\big(\dot{B}_{2,1}^{d/2}\big)}.
\end{align}
\end{theorem}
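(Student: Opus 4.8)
The plan is to run a standard fixed-point argument for the perturbed system \eqref{Perturbed system 1}--\eqref{Perturbed system 2} in the Banach space $\mathcal{B}$ defined in \eqref{B norm def}, using Theorem \ref{Besov Theorem} to invert the linear parabolic parts and Lemma \ref{Besov Lemma} together with the product estimates (Proposition 3.6 and its corollary) to control the nonlinearities. First I would recast the system in the abstract form $\partial_t U - \Lambda U = F(U)$, where $U=(z_A,z_B,z_C,\omega)$, $\Lambda$ is the diagonal second-order elliptic operator $\operatorname{diag}(k^c\Delta,k^c\Delta,k^c\Delta,\kappa\Delta/(\sum_i k^\theta \tilde c_i))$ — note the coefficient of $\Delta\omega$ on the left of \eqref{Perturbed system 2} at the equilibrium state is the constant $\sum_i k^\theta \tilde c_i>0$, so $\Lambda$ is strongly elliptic — and $F(U)$ collects everything else: the linearized reaction terms $-\sigma_i r$, the quadratic and higher-order gradient terms in $\omega$ and $z_i$, and the correction term from multiplying through by $(\sum_i k^\theta(z_i+\tilde c_i))^{-1}$. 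Crucially every term in $F(U)$ is at least quadratic in $U$ or carries a factor $1/\tilde\theta$, $1/\tilde c_i$ which is $\le h^2$ by the normalization $\tilde c_i,\tilde\theta\ge 1/h^2$; the linear-in-$U$ pieces all come with such a small prefactor.

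Next I would define the solution map $\Phi: V\mapsto U$, where $U$ solves the linear problem $\partial_t U - \Lambda U = F(V)$, $U|_{t=0}=U_0$, and show $\Phi$ maps the ball $\mathcal{O}_R:=\{U\in\mathcal{B}^4 : \|U\|_{\mathcal{B}}\le R\}$ into itself and is a contraction there, for $R=h^2$. The self-mapping estimate follows from Theorem \ref{Besov Theorem}: $\|\Phi(V)\|_{\mathcal{B}}\le C(\|U_0\|_{\dot B^{d/2}_{2,1}} + \|F(V)\|_{\mathcal{L}^1_T(\dot B^{d/2}_{2,1})})$. Using that $\dot B^{d/2}_{2,1}$ is an algebra, the time-space product rule (the exponent $q$ behaving by Hölder, as in the remark following Definition 3.8), the embedding $\mathcal{L}^\infty_T(\dot B^{d/2}_{2,1})\cdot\mathcal{L}^1_T(\dot B^{d/2}_{2,1})\to \mathcal{L}^1_T(\dot B^{d/2}_{2,1})$, and Lemma \ref{Besov Lemma}/Corollary 3.15 to handle the composed functions $1/(\omega+\tilde\theta)$, $1/(z_i+\tilde c_i)$ (smooth near the equilibrium value, with $L^\infty$ norm of the argument controlled by the embedding $\dot B^{d/2}_{2,1}\hookrightarrow L^\infty$ and $R$ small), one gets $\|F(V)\|_{\mathcal{L}^1_T(\dot B^{d/2}_{2,1})}\le C(h^2 R + R^2)$ — the $h^2 R$ from the linear reaction terms and the $1/\tilde c_i,1/\tilde\theta$ prefactors, the $R^2$ from the genuinely quadratic gradient terms. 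Combined with $\|U_0\|_{\dot B^{d/2}_{2,1}}\le h^4$, choosing $h$ small enough yields $\|\Phi(V)\|_{\mathcal{B}}\le Ch^4 + C(h^2+R)R \le R = h^2$. The contraction estimate is analogous: $F$ is (multi)linear or polynomial plus smooth-composition in its arguments, so $\|F(V_1)-F(V_2)\|_{\mathcal{L}^1_T(\dot B^{d/2}_{2,1})}\le C(h^2 + R)\|V_1-V_2\|_{\mathcal{B}}$, and for $h$ small the constant is $<1$. Banach's fixed point theorem then gives the unique $U\in\mathcal{O}_{h^2}$, hence the solution, with the claimed regularity coming directly from the $\mathcal{B}$-norm membership and Theorem \ref{Besov Theorem}.

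The main obstacle I anticipate is the careful bookkeeping of the nonlinear terms in \eqref{Perturbed system 2}: the term $(\eta_i-1)\frac{|(z_i+\tilde c_i)\nabla\omega + (\omega+\tilde\theta)\nabla z_i|^2}{(z_i+\tilde c_i)(\omega+\tilde\theta)}$ is a ratio whose denominator is bounded below by $\tilde c_i\tilde\theta\sim h^{-4}$ only after invoking $R=h^2$ smallness and the $L^\infty$ embedding, and whose numerator, when expanded, contains terms like $\tilde c_i\tilde\theta|\nabla\omega|^2/((z_i+\tilde c_i)(\omega+\tilde\theta))$ that are only quadratic after the cancellation of one power of $\tilde c_i\tilde\theta$ against the denominator — so one must expand $1/(z_i+\tilde c_i) = 1/\tilde c_i - z_i/(\tilde c_i(z_i+\tilde c_i))$ etc. and track which pieces are $O(R^2)$ versus $O(h^2 R)$. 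A second subtlety is that the second-order terms $z_i\Delta\omega$, $\omega\Delta z_i$, $z_i\frac{\Delta\omega}{\omega+\tilde\theta}$ on the right-hand side are borderline: $\Delta\omega\in\mathcal{L}^1_T(\dot B^{d/2}_{2,1})$ and $z_i\in\mathcal{L}^\infty_T(\dot B^{d/2}_{2,1})$, so the product lands in $\mathcal{L}^1_T(\dot B^{d/2}_{2,1})$ by the algebra property in time-space Besov spaces — this is exactly why the regularization subtracting $\tilde c_i\Delta\omega$ and $\tilde\theta\Delta z_i$ (noted in the remark) is essential, since otherwise one would have a linear $\Delta$ term with an $O(1)$ coefficient on the wrong side and no smallness to absorb it into the left-hand elliptic operator. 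Once these estimates are organized, everything else is routine.
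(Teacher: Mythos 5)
Your proposal is correct and is essentially the paper's argument: the authors implement exactly your fixed-point map as an explicit Picard iteration $U^{k+1}=\Phi(U^k)$, prove the self-mapping bound $\|U^{k+1}\|_{\mathcal{B}}\le h^2$ and the contraction-type decay of successive differences by induction using Theorem \ref{Besov Theorem}, the algebra property of $\dot B^{d/2}_{2,1}$, and Lemma \ref{Besov Lemma}, and then conclude via the Cauchy property in $\mathcal{B}$, with uniqueness handled by the same difference estimate. Your identification of the smallness bookkeeping ($h^2$ prefactors from $\tilde c_i,\tilde\theta\ge h^{-2}$ versus genuinely quadratic terms) and of the role of the $\tilde c_i\Delta\omega$, $\tilde\theta\Delta z_i$ regularization matches the paper's treatment.
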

The idea of the proof is to construct an iterative scheme of the following form
\begin{align*}
\partial_t f^{k+1}+ \Lambda f^{k+1}= F^{k}
\end{align*}
where we show that this yields a bounded sequence in some Besov space and where the difference between two iterations form a null sequence. 
From this we can follow that the iterative sequence convergences.
\subsection{Proof of Theorem \ref{main Theorem}}
As mentioned before, the idea of the proof of the theorem is to use an approximate scheme to construct the solution to the perturbed system of equations \eqref{Perturbed system 1}-\eqref{Perturbed system 2}.
We set the first term in the sequence $(z_i^0(t,x),\omega^0(t,x))$ is set to be zero everywhere in $\mathbb{R}_+\times \mathbb{R}^d$.
Then, we set $(z_i^k(t,x),\omega^k(t,x))$ to be the solution of the following linear approximate system.
\begin{align}
\partial_t z_i^{k+1} -k^c\Delta z_i^{k+1}&= F_i^k~~\text{for } i=A,B,C\\
\partial_t \omega^{k+1}-\bigg(\frac{(k^c)^2}{k^\theta}+\frac{\kappa}{k^\theta\sum_i \tilde{c}_i}\bigg)\Delta \omega^{k+1}&= G^k
\end{align}
where
\begin{align}\begin{split}
F_i^k&= -\sigma_i\bigg[ k^c\sum_{j}\sigma_j\frac{z^k_j}{\tilde{c}_j}-k^\theta\frac{\omega^k}{\tilde{\theta}}\bigg] +k^c \bigg(\frac{1}{\tilde{\theta}}+f(\omega^k)\bigg)\nabla z_i^k\cdot\nabla \omega^k\\
&~~~ +k^c\bigg[z_i^{k}(\frac{1}{\tilde{\theta}}+f(\omega^k))\Delta \omega^k-(z_i^{k}+\tilde{c}_i)(\frac{1}{\tilde{\theta}^2}+g(\omega^k))|\nabla \omega^k|^2\bigg)\bigg] \end{split} \\ \begin{split}
G^k&= k^\theta \sum_i \nabla z_i^k\cdot\nabla \omega^k \bigg(\frac{1}{\sum_i \tilde{c}_i}+f(\sum_i z_i^k)\bigg)+k^\theta|\nabla \omega^k|^2\bigg(\frac{1}{\tilde{\theta}}+f(\omega^k)\bigg)+\kappa f(c)\Delta\omega^k\\
&~~~ +\bigg(\frac{1}{\sum_i k^\theta \tilde{c}_i}+f(\sum_i z_i^k)\bigg)\sum_i\sigma_i k^\theta(\omega^k+\tilde{\theta})\big(k^c\sum_{j}\sigma_j\frac{z_j^k}{\tilde{c}_j}-k^\theta\frac{\omega^k}{\tilde{\theta}}\big) \\
 &~~~+ \bigg(\frac{1}{\sum_i k^\theta \tilde{c}_i}+f(\sum_i z_i^k)\bigg)(k^c)^2\sum_i \bigg[(\eta_i-1)\frac{|(z_i^k+\tilde{c}_i)\nabla \omega^k+(\omega^k+\tilde{\theta})\nabla z_i^k|^2}{(z_i^k+\tilde{c}_i)(\omega^k+\tilde{\theta})}\bigg]\\
&~~~ + \bigg(\frac{1}{\sum_i k^\theta \tilde{c}_i}+f(\sum_i z_i^k)\bigg)(k^c)^2\sum_i \bigg[\nabla z_i^k\cdot\nabla\omega^k+\omega^k \Delta z_i^k\bigg]\end{split}
\end{align}
and where we define
\begin{align*}
f(x):=\frac{1}{\tilde{x}+x}-\frac{1}{\tilde{x}}~~ \text{and } g(x):=\frac{1}{(x+\tilde{x})^2}-\frac{1}{\tilde{x}^2}
\end{align*}
We note that for $x >-\tilde{x}$ $f$ and $g$ are smooth functions and in addition for $|x|/\tilde{x}\ll 1$ the function $f$ is $\mathcal{O}(x)$ and  $g$ is $\mathcal{O}(x^2)$ respectively.

\begin{prop}[Iterative scheme]\label{existence prop}
Let  $(z_A^k,z_B^k,z_C^k,\omega^k)$ be a unique global-in-time classical solution to the perturbed system \eqref{Perturbed system 1}-\eqref{Perturbed system 2}.
Then the solution belongs to the space $\mathcal{L}_T^\infty\big(\dot{B}_{2,1}^{d/2}\big)$ fulfilling the following inequalities
\begin{align}\label{est 1}
&\|z_i^k\|_{\mathcal{B}}\leq h^2~~\text{for } i=A,B,C~~\|\omega^k\|_{\mathcal{B}}\leq h^2.
\end{align} 
Furthermore, the difference between two consecutive solutions satisfies
\begin{align}\label{est 2}
&\|\delta z_i^k\|_{\mathcal{B}}\leq h^2~~\text{for } i=A,B,C~~\|\delta\omega^k\|_{\mathcal{B}}\leq h^2.
\end{align}
\end{prop}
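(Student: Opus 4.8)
The plan is a single induction on $k$ that simultaneously produces the iterates, the uniform bound \eqref{est 1} and the difference bound \eqref{est 2}. At each step the two equations are \emph{linear} heat-type equations with sources $F_i^k,G^k$ built from the previous iterate, so once we know $F_i^k,G^k\in\mathcal L^1_T(\dot B_{2,1}^{d/2})$, Theorem \ref{Besov Theorem} and its Corollary give a unique solution $(z_i^{k+1},\omega^{k+1})$ with $\|z_i^{k+1}\|_{\mathcal B}+\|\omega^{k+1}\|_{\mathcal B}\le C\big(\|z_{i,0}\|_{\dot B_{2,1}^{d/2}}+\|\omega_0\|_{\dot B_{2,1}^{d/2}}+\|F_i^k\|_{\mathcal L^1_T(\dot B_{2,1}^{d/2})}+\|G^k\|_{\mathcal L^1_T(\dot B_{2,1}^{d/2})}\big)$, plus the auxiliary smoothing norms $\|z_i^{k+1}\|_{\mathcal L^2_T(\dot B_{2,1}^{d/2+1})}$ etc. controlled by the same right-hand side. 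The only structural inputs from Section 3 are: the product estimate (in particular the algebra property of $\dot B_{2,1}^{d/2}$), the composition Lemma \ref{Besov Lemma} and its Corollary applied to the smooth functions $f,g$ and to $x\mapsto 1/(x+\tilde x)$, and the embedding $\dot B_{2,1}^{d/2}\hookrightarrow L^\infty$. The base case is immediate: $(z_i^0,\omega^0)\equiv 0$ forces $F_i^0=G^0=0$, so $z_i^1,\omega^1$ solve homogeneous equations with data $z_{i,0},\omega_0$, whence $\|z_i^1\|_{\mathcal B}+\|\omega^1\|_{\mathcal B}\le C h^4\le h^2$ for $h$ small.

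For the inductive step, assume \eqref{est 1} at level $k$. The embedding gives $\|\omega^k\|_{L^\infty}\lesssim\|\omega^k\|_{\dot B_{2,1}^{d/2}}\le h^2$, and similarly $\|z_i^k\|_{L^\infty}\lesssim h^2$; combined with $\tilde\theta\ge h^{-2}$ and $\tilde c_i\ge h^{-2}$ this yields the crucial a priori lower bounds $\omega^k+\tilde\theta\ge\frac{1}{2}h^{-2}>0$ and $z_i^k+\tilde c_i\ge\frac{1}{2}h^{-2}>0$, so that every denominator occurring in $F_i^k,G^k$ stays in the region where $f,g$ and the reciprocals are smooth (equivalently, the arguments stay in a fixed compact neighbourhood of $0$, so one may work with a globally smooth modification vanishing at $0$). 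Lemma \ref{Besov Lemma} then gives $\|f(\omega^k)\|_{\dot B_{2,1}^{d/2}}\lesssim\|\omega^k\|_{\dot B_{2,1}^{d/2}}\lesssim h^2$ and $\|g(\omega^k)\|_{\dot B_{2,1}^{d/2}}\lesssim h^2$, the prefactor $Q$ being bounded because its argument $\|\omega^k\|_{L^\infty}$ is small.

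Next I would estimate $F_i^k$ and $G^k$ in $\mathcal L^1_T(\dot B_{2,1}^{d/2})$ term by term, the point being that every summand carries a smallness factor: the linearized-reaction contributions come with the constant prefactors $1/\tilde c_j,1/\tilde\theta\le h^2$ (and the $G^k$-terms with $1/\sum_i k^\theta\tilde c_i\le h^2$), while the remaining terms are multilinear in $(z^k,\omega^k)$ with at least one extra factor of size $\lesssim h^2$. Zeroth-order products are bounded by the algebra property; the delicate terms are the ones quadratic in first derivatives ($\nabla z_i^k\cdot\nabla\omega^k$, $|\nabla\omega^k|^2$) and the second-order ones ($z_i^k\Delta\omega^k$, $\omega^k\Delta z_i^k$, and the expanded quotient $|(z_i^k+\tilde c_i)\nabla\omega^k+(\omega^k+\tilde\theta)\nabla z_i^k|^2/((z_i^k+\tilde c_i)(\omega^k+\tilde\theta))$), for which the naive product law in $\dot B_{2,1}^{d/2}$ is not available uniformly in $T$ (for $d=2$ the index sum is critical). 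These I would handle with the gain of the Corollary to Theorem \ref{Besov Theorem}: $\|\nabla z_i^k\cdot\nabla\omega^k\|_{\mathcal L^1_T(\dot B_{2,1}^{d/2})}\lesssim\|\nabla z_i^k\|_{\mathcal L^2_T(\dot B_{2,1}^{d/2})}\|\nabla\omega^k\|_{\mathcal L^2_T(\dot B_{2,1}^{d/2})}\lesssim\|z_i^k\|_{\mathcal L^2_T(\dot B_{2,1}^{d/2+1})}\|\omega^k\|_{\mathcal L^2_T(\dot B_{2,1}^{d/2+1})}\lesssim h^4$ by the algebra property and Hölder in time, and $\|f(\omega^k)\Delta\omega^k\|_{\mathcal L^1_T(\dot B_{2,1}^{d/2})}\lesssim\|f(\omega^k)\|_{\mathcal L^\infty_T(\dot B_{2,1}^{d/2})}\|\Delta\omega^k\|_{\mathcal L^1_T(\dot B_{2,1}^{d/2})}\lesssim h^4$ (likewise $\omega^k\Delta z_i^k$), all $T$-independent; the zeroth-order reaction terms likewise contribute $\lesssim h^2$ times a controlled norm (and, being of zeroth order, are the one place where one must either absorb them into the dissipative linear operator or carry them through the smallness bookkeeping with care). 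Summing, $\|F_i^k\|_{\mathcal L^1_T(\dot B_{2,1}^{d/2})}+\|G^k\|_{\mathcal L^1_T(\dot B_{2,1}^{d/2})}\le C_0 h^4$ with $C_0$ independent of $k,T$, and feeding this back gives $\|z_i^{k+1}\|_{\mathcal B}+\|\omega^{k+1}\|_{\mathcal B}\le C(1+C_0)h^4\le h^2$ once $h$ is fixed with $C(1+C_0)h^2\le 1$, closing \eqref{est 1}.

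Finally, for \eqref{est 2} I would observe that $\delta z_i^k:=z_i^{k+1}-z_i^k$ and $\delta\omega^k:=\omega^{k+1}-\omega^k$ solve the \emph{same} linear heat equations with zero initial data and sources $\delta F_i^k=F_i^k-F_i^{k-1}$, $\delta G^k=G^k-G^{k-1}$. Writing each difference of products telescopically and each difference of compositions as $f(\omega^k)-f(\omega^{k-1})=\delta\omega^{k-1}\int_0^1 f'(\omega^{k-1}+t\,\delta\omega^{k-1})\,dt$ (which lies in $\dot B_{2,1}^{d/2}$ by the Corollary to Lemma \ref{Besov Lemma}, again using the lower bounds on $\omega^{k}+\tilde\theta$ and $z_i^k+\tilde c_i$), every summand of $\delta F_i^k,\delta G^k$ contains one factor $\delta z_j^{k-1}$ or $\delta\omega^{k-1}$ and a further factor of size $\lesssim h^2$; the same $\mathcal L^2_T(\dot B_{2,1}^{d/2+1})$/algebra bookkeeping yields $\sum_i\|\delta z_i^k\|_{\mathcal B}+\|\delta\omega^k\|_{\mathcal B}\le C_1 h^2\big(\sum_j\|\delta z_j^{k-1}\|_{\mathcal B}+\|\delta\omega^{k-1}\|_{\mathcal B}\big)$, so for $h$ small the map is a contraction; \eqref{est 2} follows (crudely, from \eqref{est 1}), and moreover $(z_i^k,\omega^k)_k$ is Cauchy in $\mathcal B$. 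I expect the main obstacle to be exactly the two middle paragraphs: controlling the gradient-quadratic and second-order nonlinearities in $\mathcal L^1_T(\dot B_{2,1}^{d/2})$ \emph{uniformly in $T$} at the critical regularity, which forces systematic use of the extra parabolic smoothing ($\mathcal L^2_T(\dot B_{2,1}^{d/2+1})$ and $\mathcal L^1_T(\dot B_{2,1}^{d/2+2})$) from the Corollary to Theorem \ref{Besov Theorem} in place of the product law, and keeping the compositions with $f,g,1/(x+\tilde x)$ legitimate, which relies on the a priori lower bounds $\omega^k+\tilde\theta,\,z_i^k+\tilde c_i\gtrsim h^{-2}$ inherited from $\tilde\theta,\tilde c_i\ge h^{-2}$ and the $L^\infty$ smallness of the iterates.
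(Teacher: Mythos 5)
Your proposal is correct and follows essentially the same route as the paper: apply the linear parabolic estimate of Theorem \ref{Besov Theorem} (and its corollary for the $\mathcal{L}^2_T(\dot{B}_{2,1}^{d/2+1})$ smoothing norms) to each iterate, bound $F_i^k$ and $G^k$ term by term using the algebra property of $\dot{B}_{2,1}^{d/2}$, the composition Lemma \ref{Besov Lemma} for $f,g$, and the smallness $1/\tilde{c}_i,1/\tilde{\theta}\le h^2$ to get $\|F_i^k\|+\|G^k\|\lesssim h^4$, then close the induction and run the analogous contraction estimate on $\delta F_i^k,\delta G^k$ for the differences. Your explicit attention to the critical product terms via the parabolic gain and to the lower bounds keeping the compositions legitimate matches (and in places makes more precise) the bookkeeping carried out in the paper's proof.
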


From this proposition the proof of Theorem \ref{main Theorem} can be proven as follows.
Let $(z_A^k,z_B^k,z_C^k,\omega^k)$ be an approximate solution satisfying the estimate of Proposition \ref{existence prop}.
Then the following series converges
\begin{align*}
\sum_{k=1}^\infty \sum_i\|\delta z_i^k\|_{\mathcal{B}}+\|\delta\omega^k\|_{\mathcal{B}}<\infty.
\end{align*}
Thus we conclude that the sequence $\big(z_A^k,z_B^k,z_C^k,\omega^k\big)_{k\in \mathbb{N}}$ forms a Cauchy sequence in the space $\mathcal{B}$ and the limit $(z_A,z_B,z_C,\omega)$ is a strong solution of the perturbed system \eqref{Perturbed system 1}-\eqref{Perturbed system 2}.\\

The proof of this proposition is split up into several steps.
The first one is to show the approximate solutions are bounded in the Besov space $\mathcal{B}$.\\

\textit{Concentration equation}:
We consider an approximate solution $z_i^{k}$ and aim to show that the next level in the approximation is bounded by $\|z_i^{k+1}\|_{\mathcal{B}}\leq h^2$.
Then, by Theorem \ref{Besov Theorem} we have that the norm of $\|z_i^{k+1}\|_{\mathcal{B}}$ is bounded by
\begin{align*}
\|z_i^{k+1}\|_{\mathcal{L}_T^\infty(\dot{B}_{2,1}^{d/2})}&+\|\partial_t z_i^{k+1}\|_{\mathcal{L}_T^1(\dot{B}_{2,1}^{d/2})}+k^c\|\Delta z_i^{k+1}\|_{\mathcal{L}_T^1(\dot{B}_{2,1}^{d/2})}\\
&\leq C\big[\|z_{i,0}\|_{\dot{B}_{2,1}^{d/2}}+\|F_i^k\|_{\mathcal{L}_T^1(\dot{B}_{2,1}^{d/2})}\big]
\end{align*}
By the smallness assumption on the initial data we obtain 
\begin{align}\label{eq conc 1}\begin{split}
\|z_i^{k+1}\|_{\mathcal{L}_T^\infty(\dot{B}_{2,1}^{d/2})}&+\|\partial_t z_i^{k+1}\|_{\mathcal{L}_T^1(\dot{B}_{2,1}^{d/2})}+k^c\|\Delta z_i^{k+1}\|_{\mathcal{L}_T^1(\dot{B}_{2,1}^{d/2})}\\
&\leq C\big[h^4+\|F_i^k\|_{\mathcal{L}_T^1(\dot{B}_{2,1}^{d/2})}\big]\end{split}
\end{align}
We claim that the forcing term is bounded by
\begin{align*}
\|F_i^k\|_{\mathcal{L}_T^1(\dot{B}_{2,1}^{d/2})}&\lesssim k^c \sum_j \frac{\|z_j^k\|_{\mathcal{L}_T^1(\dot{B}_{2,1}^{d/2})}}{\tilde{c}_j} + k^\theta\frac{\|\omega^k\|_{\mathcal{L}_T^1(\dot{B}_{2,1}^{d/2})}}{\tilde{\theta}}\\
&+k^c\big(\frac{1}{\tilde{\theta}}+\|f(\omega^k)\|_{\mathcal{L}_T^\infty(\dot{B}_{2,1}^{d/2})}\big)\|\nabla z_i^k\|_{\mathcal{L}_T^2(\dot{B}_{2,1}^{d/2})}\|\nabla\omega^k\|_{\mathcal{L}_T^2(\dot{B}_{2,1}^{d/2})}\\
&+k^c \|z_i^k\|_{\mathcal{L}_T^\infty(\dot{B}_{2,1}^{d/2})}\big(\frac{1}{\tilde{\theta}}+\|f(\omega^k)\|_{\mathcal{L}_T^\infty(\dot{B}_{2,1}^{d/2})}\big)\|\Delta \omega^k\|_{\mathcal{L}_T^1(\dot{B}_{2,1}^{d/2})}\\
&+k^c(\tilde{c}_i+\|z_i^k\|_{\mathcal{L}_T^\infty(\dot{B}_{2,1}^{d/2})})\big(\frac{1}{\tilde{\theta}^2}+\|g(\omega^k)\|_{\mathcal{L}_T^\infty(\dot{B}_{2,1}^{d/2})}\big)\|\nabla\omega^k\|_{\mathcal{L}_T^2(\dot{B}_{2,1}^{d/2})}^2
\end{align*}
Then the assumption on the equilibrium state we estimate
\begin{align*}
\|F_i^k\|_{\mathcal{L}_T^1(\dot{B}_{2,1}^{d/2})}&\lesssim k^c \sum_j h^2\|z_j^k\|_{\mathcal{L}_T^1(\dot{B}_{2,1}^{d/2})}+k^\theta h^2 \|\omega^k\|_{\mathcal{L}_T^1(\dot{B}_{2,1}^{d/2})}\\
&+k^c\big(h^2+\|f(\omega^k)\|_{\mathcal{L}_T^\infty(\dot{B}_{2,1}^{d/2})}\big)\|\nabla z_i^k\|_{\mathcal{L}_T^2(\dot{B}_{2,1}^{d/2})}\|\nabla\omega^k\|_{\mathcal{L}_T^2(\dot{B}_{2,1}^{d/2})}\\
&+k^c\|z_i^k\|_{\mathcal{L}_T^\infty(\dot{B}_{2,1}^{d/2})} \big(h^2+\|f(\omega^k)\|_{\mathcal{L}_T^\infty(\dot{B}_{2,1}^{d/2})}\big)\|\Delta \omega^k\|_{\mathcal{L}_T^1(\dot{B}_{2,1}^{d/2})}\\
&+(h^{-2}+\|z_i^k\|_{\mathcal{L}_T^\infty(\dot{B}_{2,1}^{d/2})})\big(h^4+\|g(\omega^k)\|_{\mathcal{L}_T^\infty(\dot{B}_{2,1}^{d/2})}\big)\|\nabla\omega^k\|_{\mathcal{L}_T^2(\dot{B}_{2,1}^{d/2})}^2
\intertext{and using Lemma \ref{Besov Lemma} yields}
\|F_i^k\|_{\mathcal{L}_T^1(\dot{B}_{2,1}^{d/2})}&\lesssim k^c \sum_j h^2\|z_j^k\|_{\mathcal{L}_T^1(\dot{B}_{2,1}^{d/2})}+ h^2 \|\omega^k\|_{\mathcal{L}_T^1(\dot{B}_{2,1}^{d/2})}\\
&+k^c\big(h^2+Q(f,\omega^k)\|\omega^k\|_{\mathcal{L}_T^\infty(\dot{B}_{2,1}^{d/2})}\big)\|\nabla z_i^k\|_{\mathcal{L}_T^2(\dot{B}_{2,1}^{d/2})}\|\nabla\omega^k\|_{\mathcal{L}_T^2(\dot{B}_{2,1}^{d/2})}\\
&+k^c\|z_i^k\|_{\mathcal{L}_T^\infty(\dot{B}_{2,1}^{d/2})} \big(h^2+Q(f,\omega^k)\|\omega^k\|_{\mathcal{L}_T^\infty(\dot{B}_{2,1}^{d/2})}\big)\|\Delta \omega^k\|_{\mathcal{L}_T^1(\dot{B}_{2,1}^{d/2})}\\
&+(h^{-2}+\|z_i^k\|_{\mathcal{L}_T^\infty(\dot{B}_{2,1}^{d/2})})\big(h^4+Q(g,\omega^k)\|\omega^k\|_{\mathcal{L}_T^\infty(\dot{B}_{2,1}^{d/2})}\big)\|\nabla\omega^k\|_{\mathcal{L}_T^2(\dot{B}_{2,1}^{d/2})}^2
\end{align*}
We observe that by the assumption on $z_i^k$ and $\omega^k$ for any fixed $k$ we have
\begin{align*}
\|F_i^k\|_{\mathcal{L}_T^1(\dot{B}_{2,1}^{d/2})}&\lesssim k^c h^2 h^2+k^c\big(h^2+Q(f,\omega^k)h^2\big)h^2 h^2 + h^2 h^2\\
&+k^c h^2 \big(h^2+Q(f,\omega^k)h^2\big)h^2+(h^{-2}+h^2)\big(h^4+Q(g,\omega^k)h^2\big)h^4
\end{align*}
Thus we obtain that
\begin{align}\label{eq conc 2}
\|F_i^k\|_{\mathcal{L}_T^1(\dot{B}_{2,1}^{d/2})}\lesssim h^4
\end{align}
Combining the estimate from equation \eqref{eq conc 1} with the estimate in equation \eqref{eq conc 2} yields
\begin{align}
\|z_i^{k+1}\|_{\mathcal{L}_T^\infty(\dot{B}_{2,1}^{d/2})}+\|\partial_t z_i^{k+1}\|_{\mathcal{L}_T^1(\dot{B}_{2,1}^{d/2})}+k^c\|\Delta z_i^{k+1}\|_{\mathcal{L}_T^1(\dot{B}_{2,1}^{d/2})}\leq C h^4
\end{align}
and thus $\|z_i^{k+1}\|_{\mathcal{B}}\leq h^2$ which concludes the proof of the first estimate in \eqref{est 1}.\\
Now, we consider the difference between two solutions $\delta z_i^{k+1}=z_i^{k+2}-z_i^{k+1}$.
Then $\delta z_i^{k+1}$ is a solution to
\begin{align*}
\partial_t \delta z_i^{k+1} -k^c\Delta \delta z_i^{k+1}= \delta F_i^k
\end{align*}
where 
\begin{align*}
\delta F_i^k&= -\sigma_i\bigg[ k^c\sum_{j}\sigma_j\frac{\delta z^k_j}{\tilde{c}_j}-k^\theta\frac{\delta \omega^k}{\tilde{\theta}}\bigg] -k^c \delta z_i^k \frac{|\nabla \omega^{k+1}|^2}{(\omega^{k+1}+\tilde{\theta})^2}\\
& +k^c\bigg[\nabla \delta z_i^k\cdot \frac{\nabla \omega^{k+1}}{\omega^{k+1}+\tilde{\theta}}-\nabla z_i^k\cdot\bigg(\frac{\nabla \delta\omega^k}{\omega^{k+1}+\tilde{\theta}}+\frac{\nabla \omega^k \delta\omega^k}{(\omega^{k+1}+\tilde{\theta})(\omega^k+\tilde{\theta})}\bigg)\bigg]\\
&+k^c\bigg[\delta z_i^k \frac{\Delta\omega^{k+1}}{\omega^{k+1}+\tilde{\theta}}+z_i^k \bigg(\frac{\Delta\delta\omega^k}{\omega^{k+1}+\tilde{\theta}}+\frac{\Delta\omega^k\delta\omega^k}{(\omega^{k+1}+\tilde{\theta})(\omega^{k}+\tilde{\theta})}\bigg)\bigg]\\
&-k^c(z_i^k+\tilde{c}_i)\bigg(\frac{\nabla \delta \omega^{k+1}\cdot(\nabla \omega^{k+1}+\nabla \omega^k)}{(\omega^{k+1}+\tilde{\theta})^2}-\frac{|\nabla \omega^k|^2(2\tilde{\theta}+\omega^{k+1}+\omega^k)}{(\omega^{k+1}+\tilde{\theta})^2(\omega^{k}+\tilde{\theta})^2}\delta\omega^k\bigg)
\end{align*} 
This can be rewritten as follows
\begin{align*}
\delta F_i^k=& -\sigma_i\bigg[ k^c\sum_{j}\sigma_j\frac{\delta z^k_j}{\tilde{c}_j}-k^\theta\frac{\delta \omega^k}{\tilde{\theta}}\bigg] +k^c\nabla \delta z_i^k\cdot \nabla \omega^{k+1}(\frac{1}{\tilde{\theta}}+f(\omega^{k+1}))\\
&-k^c\nabla z_i^k\cdot\bigg(\nabla \delta\omega^k (\frac{1}{\tilde{\theta}}+f(\omega^{k+1}))+\nabla \omega^k \delta\omega^k (\frac{1}{\tilde{\theta}}+g(\omega^{k+1},\omega^k))\bigg)\\
&+k^c\delta z_i^k \Delta\omega^{k+1}(\frac{1}{\tilde{\theta}}+f(\omega^{k+1}))\\
&+ k^c z_i^k\bigg(\Delta\delta\omega^k (\frac{1}{\tilde{\theta}}+f(\omega^{k+1}))+\Delta\omega^k\delta\omega^k (\frac{1}{\tilde{\theta}^2}+g(\omega^{k+1},\omega^k))\bigg)\\
&-k^c\delta z_i^k |\nabla \omega^{k+1}|^2\big(\frac{1}{\tilde{\theta}^2}+g(\omega^{k+1})\big)\\
&-k^c(z_i^k+\tilde{c}_i)\bigg(\nabla \delta \omega^{k}\cdot(\nabla \omega^{k+1}+\nabla \omega^k)\big(\frac{1}{\tilde{\theta}^2}+g(\omega^{k+1})\big)\\
&-k^c(z_i^k+\tilde{c}_i)|\nabla \omega^k|^2(2\tilde{\theta}+\omega^{k+1}+\omega^k)\big(\frac{1}{\tilde{\theta}^2}+g(\omega^{k+1})\big)\big(\frac{1}{\tilde{\theta}^2}+g(\omega^{k1})\big)   \delta\omega^k
\end{align*}
Again applying Theorem \ref{Besov Theorem} yields
\begin{align}\label{eq conc diff 1}
\|\delta z_i^{k+1}\|_{\mathcal{L}_T^\infty(\dot{B}_{2,1}^{d/2})}+\|\partial_t\delta  z_i^{k+1}\|_{\mathcal{L}_T^1(\dot{B}_{2,1}^{d/2})}+k^c\|\Delta\delta z_i^{k+1}\|_{\mathcal{L}_T^1(\dot{B}_{2,1}^{d/2})}\leq C \|\delta F_i^k\|_{\mathcal{L}_T^1(\dot{B}_{2,1}^{d/2})}
\end{align}
where we can estimate further
\begin{align*}
\|\delta F_i^k\|_{\mathcal{L}_T^1(\dot{B}_{2,1}^{d/2})}\lesssim&  \sum_{j}\sigma_j\frac{\|\delta z^k_j\|_{\mathcal{L}_T^1(\dot{B}_{2,1}^{d/2})}}{\tilde{c}_j}+\frac{\|\delta \omega^k\|_{\mathcal{L}_T^1(\dot{B}_{2,1}^{d/2})}}{\tilde{\theta}}\\
& +\|\nabla \delta z_i^k\|_{\mathcal{L}_T^2(\dot{B}_{2,1}^{d/2})} \|\nabla \omega^{k+1}\|_{\mathcal{L}_T^2(\dot{B}_{2,1}^{d/2})}(\frac{1}{\tilde{\theta}}+\|f(\omega^{k+1})\|_{\mathcal{L}_T^\infty(\dot{B}_{2,1}^{d/2})})\\
&+\|\nabla z_i^k\|_{\mathcal{L}_T^2(\dot{B}_{2,1}^{d/2})}\|\nabla \delta\omega^k \|_{\mathcal{L}_T^2(\dot{B}_{2,1}^{d/2})}(\frac{1}{\tilde{\theta}}+\|f(\omega^{k+1})\|_{\mathcal{L}_T^\infty(\dot{B}_{2,1}^{d/2})})\\
&+\|\nabla z_i^k\|_{\mathcal{L}_T^2(\dot{B}_{2,1}^{d/2})}\|\nabla \omega^k\|_{\mathcal{L}_T^2(\dot{B}_{2,1}^{d/2})} \|\delta\omega^k\|_{\mathcal{L}_T^\infty(\dot{B}_{2,1}^{d/2})}\\
&~~~~\times (\frac{1}{\tilde{\theta}}+\|g(\omega^{k+1},\omega^k)\|_{\mathcal{L}_T^\infty(\dot{B}_{2,1}^{d/2})})\\
&+ \|\delta z_i^k\|_{\mathcal{L}_T^\infty(\dot{B}_{2,1}^{d/2})}\| \Delta\omega^{k+1}\|_{\mathcal{L}_T^1(\dot{B}_{2,1}^{d/2})}(\frac{1}{\tilde{\theta}}+\|f(\omega^{k+1})\|_{\mathcal{L}_T^\infty(\dot{B}_{2,1}^{d/2})})\\
&+\|z_i^k\|_{\mathcal{L}_T^\infty(\dot{B}_{2,1}^{d/2})}\|\Delta\delta\omega^k\|_{\mathcal{L}_T^1(\dot{B}_{2,1}^{d/2})} (\frac{1}{\tilde{\theta}}+\|f(\omega^{k+1})\|_{\mathcal{L}_T^\infty(\dot{B}_{2,1}^{d/2})})\\
&+(\|z_i^k\|_{\mathcal{L}_T^\infty(\dot{B}_{2,1}^{d/2})}+\tilde{c}_i)\|\Delta\omega^k\|_{\mathcal{L}_T^1(\dot{B}_{2,1}^{d/2})}\|\delta\omega^k \|_{\mathcal{L}_T^\infty(\dot{B}_{2,1}^{d/2})}\\
&~~~~\times(\frac{1}{\tilde{\theta}^2}+\|g(\omega^{k+1},\omega^k)\|_{\mathcal{L}_T^\infty(\dot{B}_{2,1}^{d/2})})\\
&+\|\delta z_i^k\|_{\mathcal{L}_T^\infty(\dot{B}_{2,1}^{d/2})} \|\nabla \omega^{k+1}\|_{\mathcal{L}_T^2(\dot{B}_{2,1}^{d/2})}^2\big(\frac{1}{\tilde{\theta}^2}+\|g(\omega^{k+1})\|_{\mathcal{L}_T^\infty(\dot{B}_{2,1}^{d/2})}\big)\\
&+(\|z_i^k\|_{\mathcal{L}_T^\infty(\dot{B}_{2,1}^{d/2})}+\tilde{c}_i)(\|\nabla \omega^{k+1}\|_{\mathcal{L}_T^2(\dot{B}_{2,1}^{d/2})}+\|\nabla \omega^k\|_{\mathcal{L}_T^2(\dot{B}_{2,1}^{d/2})})\\
&~~~~\times\|\nabla \delta \omega^{k}\|_{\mathcal{L}_T^2(\dot{B}_{2,1}^{d/2})}\big(\frac{1}{\tilde{\theta}^2}+\|g(\omega^{k+1})\|_{\mathcal{L}_T^\infty(\dot{B}_{2,1}^{d/2})}\big)\bigg)\\
&+(\|z_i^k\|_{\mathcal{L}_T^\infty(\dot{B}_{2,1}^{d/2})}+\tilde{c}_i(2\tilde{\theta}+\|\omega^{k+1}\|_{\mathcal{L}_T^\infty(\dot{B}_{2,1}^{d/2})}+\|\omega^k\|_{\mathcal{L}_T^\infty(\dot{B}_{2,1}^{d/2})})\\
&~~~~\times \big(\frac{1}{\tilde{\theta}^2}+\|g(\omega^{k+1})\|_{\mathcal{L}_T^\infty(\dot{B}_{2,1}^{d/2})}\big))\|\nabla \omega^k\|_{\mathcal{L}_T^2(\dot{B}_{2,1}^{d/2})}^2\\
&~~~~\times \big(\frac{1}{\tilde{\theta}^2}+\|g(\omega^{k+1})\|_{\mathcal{L}_T^\infty(\dot{B}_{2,1}^{d/2})}\big)   \|\delta\omega^k\|_{\mathcal{L}_T^\infty(\dot{B}_{2,1}^{d/2})}
\end{align*}
By using the assumptions on the equilibrium state and by applying the previous estimates we obtain
\begin{align*}
\|\delta F_i^k\|_{\mathcal{L}_T^1(\dot{B}_{2,1}^{d/2})}&\lesssim (h^2+h^5+h^7)\sum_{j}\|\delta z^k_j\|_{\mathcal{L}_T^1(\dot{B}_{2,1}^{d/2})}+h^5 \|\nabla \delta z_i^k\|_{\mathcal{L}_T^2(\dot{B}_{2,1}^{d/2})}\\
&~~+\|\nabla \delta\omega^k \|_{\mathcal{L}_T^2(\dot{B}_{2,1}^{d/2})}(h^3+h^4+h^5+h^7)+\|\Delta\delta\omega^k\|_{\mathcal{L}_T^1(\dot{B}_{2,1}^{d/2})}h^4\\
&~~+(h^2+h^3+h^5+h^6)\|\delta \omega^k\|_{\mathcal{L}_T^1(\dot{B}_{2,1}^{d/2})}
\end{align*}
Now, taking into account the induction assumption yields the following
\begin{align}
\|\delta F_i^k\|_{\mathcal{L}_T^1(\dot{B}_{2,1}^{d/2})}\lesssim h^{k+2}
\end{align}
Combining the above estimates yields
\begin{align}
\|\delta z_i^{k+1}\|_{\mathcal{L}_T^\infty(\dot{B}_{2,1}^{d/2})}+\|\partial_t\delta  z_i^{k+1}\|_{\mathcal{L}_T^1(\dot{B}_{2,1}^{d/2})}+k^c\|\Delta\delta z_i^{k+1}\|_{\mathcal{L}_T^1(\dot{B}_{2,1}^{d/2})}\leq h^{k+1}
\end{align}
which concludes the proof of the induction. \\

\textit{Temperature equation:} We proceed in a similar fashion as for the concentration equation.
Let $\omega^k$ be the approximate solution to the previous step. 
Then by Theorem \ref{Besov Theorem} we have that the solution to the next step $\omega^{k+1}$ in the approximate temperature equation exists and that the norm of $\|\omega^{k+1}\|_\mathcal{B}$ is bounded by
\begin{align*}
\|\omega^{k+1}\|_{\mathcal{L}_T^\infty(\dot{B}_{2,1}^{d/2})} &+\|\partial_t\omega^{k+1}\|_{\mathcal{L}_T^1(\dot{B}_{2,1}^{d/2})}+\|\Delta\omega^{k+1}\|_{\mathcal{L}_T^1(\dot{B}_{2,1}^{d/2})}\\
&\leq C\big[\|\omega_0\|_{\dot{B}_{2,1}^{d/2}}+\|G^{k}\|_{\mathcal{L}_T^1(\dot{B}_{2,1}^{d/2})}\big].
\end{align*}
By the assumption on the initial perturbation in the temperature we obtain
\begin{align}\begin{split}
\|\omega^{k+1}\|_{\mathcal{L}_T^\infty(\dot{B}_{2,1}^{d/2})} &+\|\partial_t\omega^{k+1}\|_{\mathcal{L}_T^1(\dot{B}_{2,1}^{d/2})}+\|\Delta\omega^{k+1}\|_{\mathcal{L}_T^1(\dot{B}_{2,1}^{d/2})}\\
&\leq C\big[h^4+\|G^{k}\|_{\mathcal{L}_T^1(\dot{B}_{2,1}^{d/2})}\big].\end{split}
\end{align}
Next, we claim that the forcing term can be bounded as follows
\begin{align*}
\|G^k\|_{\mathcal{L}_T^1(\dot{B}_{2,1}^{d/2})}\leq&  \sum_i \|\nabla z_i^k\|_{\mathcal{L}_T^2(\dot{B}_{2,1}^{d/2})}\|\nabla \omega^k\|_{\mathcal{L}_T^2(\dot{B}_{2,1}^{d/2})} \bigg(\tilde{c}^{-1}+\|f(c^k)\|_{\mathcal{L}_T^\infty(\dot{B}_{2,1}^{d/2})}\bigg)\\
&+\|\nabla \omega^k\|_{\mathcal{L}_T^2(\dot{B}_{2,1}^{d/2})}^2\bigg(\tilde{\theta}^{-1}+\|f(\omega^k)\|_{\mathcal{L}_T^\infty(\dot{B}_{2,1}^{d/2})}\bigg)\\
&+ \|f(c^k)\|_{\mathcal{L}_T^\infty(\dot{B}_{2,1}^{d/2})}\|\Delta\omega^k\|_{\mathcal{L}_T^1(\dot{B}_{2,1}^{d/2})}\\
& +\bigg(\frac{1}{\tilde{c}}+\|f(c^k)\|_{\mathcal{L}_T^\infty(\dot{B}_{2,1}^{d/2})}\bigg) (\|\omega^k\|_{\mathcal{L}_T^\infty(\dot{B}_{2,1}^{d/2})}+\tilde{\theta})\\
&~~~\times\bigg(\sum_{j}\frac{\|z_j^k\|_{\mathcal{L}_T^1(\dot{B}_{2,1}^{d/2})}}{\tilde{c}_j}+\frac{\|\omega^k\|_{\mathcal{L}_T^1(\dot{B}_{2,1}^{d/2})}}{\tilde{\theta}}\bigg) \\
& + \bigg(\frac{1}{\tilde{c}}+\|f(c^k)\|_{\mathcal{L}_T^\infty(\dot{B}_{2,1}^{d/2})}\bigg)\sum_i \|\nabla z_i^k\|_{\mathcal{L}_T^2(\dot{B}_{2,1}^{d/2})}\|\nabla\omega^k\|_{\mathcal{L}_T^2(\dot{B}_{2,1}^{d/2})}\\
&+ \bigg(\frac{1}{\tilde{c}}+\|f(c^k)\|_{\mathcal{L}_T^\infty(\dot{B}_{2,1}^{d/2})}\bigg)\sum_i\|\omega^k\|_{\mathcal{L}_T^\infty(\dot{B}_{2,1}^{d/2})} \|\Delta z_i^k\|_{\mathcal{L}_T^1(\dot{B}_{2,1}^{d/2})},
\end{align*}
where we assume that $\eta_i=1$ and thus the additional term can be dropped.
The assumptions on the equilibrium state and applying Lemma \ref{Besov Lemma} then yields
\begin{align*}
\|G^k\|_{\mathcal{L}_T^1(\dot{B}_{2,1}^{d/2})}\lesssim & \sum_i \|\nabla z_i^k\|_{\mathcal{L}_T^2(\dot{B}_{2,1}^{d/2})}\|\nabla \omega^k\|_{\mathcal{L}_T^2(\dot{B}_{2,1}^{d/2})} \bigg(h^2+Q(f,c^k)\|c^k\|_{\mathcal{L}_T^\infty(\dot{B}_{2,1}^{d/2})}\bigg)\\
&+\|\nabla \omega^k\|_{\mathcal{L}_T^2(\dot{B}_{2,1}^{d/2})}^2\bigg(h^2+Q(f,\omega^k)\|\omega^k\|_{\mathcal{L}_T^\infty(\dot{B}_{2,1}^{d/2})}\bigg)\\
&+ Q(f,c^k)\|c^k\|_{\mathcal{L}_T^\infty(\dot{B}_{2,1}^{d/2})}\|\Delta\omega^k\|_{\mathcal{L}_T^1(\dot{B}_{2,1}^{d/2})}\\
&+\bigg(h^2+Q(f,c^k)\|c^k\|_{\mathcal{L}_T^\infty(\dot{B}_{2,1}^{d/2})}\bigg) (\|\omega^k\|_{\mathcal{L}_T^\infty(\dot{B}_{2,1}^{d/2})}+h^{-2})\\
&~~~\times h^2\bigg(\sum_{j}\|z_j^k\|_{\mathcal{L}_T^1(\dot{B}_{2,1}^{d/2})}+\|\omega^k\|_{\mathcal{L}_T^1(\dot{B}_{2,1}^{d/2})}\bigg) \\
& + \bigg(h^2+Q(f,c^k)\|c^k\|_{\mathcal{L}_T^\infty(\dot{B}_{2,1}^{d/2})}\bigg)\sum_i \|\nabla z_i^k\|_{\mathcal{L}_T^2(\dot{B}_{2,1}^{d/2})}\|\nabla\omega^k\|_{\mathcal{L}_T^2(\dot{B}_{2,1}^{d/2})}\\
&+ \bigg(h^2+Q(f,c^k)\|c^k\|_{\mathcal{L}_T^\infty(\dot{B}_{2,1}^{d/2})}\bigg)\sum_i\|\omega^k\|_{\mathcal{L}_T^\infty(\dot{B}_{2,1}^{d/2})} \|\Delta z_i^k\|_{\mathcal{L}_T^1(\dot{B}_{2,1}^{d/2})}
\end{align*}
Using the control of $z_i^k$ and $\omega^k$ we have
\begin{align}
\|G^k\|_{\mathcal{L}_T^1(\dot{B}_{2,1}^{d/2})}&\leq h^4
\end{align}
Hence we obtain
\begin{align}
\|\omega^{k+1}\|_{\mathcal{L}_T^\infty(\dot{B}_{2,1}^{d/2})} +\|\partial_t\omega^{k+1}\|_{\mathcal{L}_T^1(\dot{B}_{2,1}^{d/2})}+\|\Delta\omega^{k+1}\|_{\mathcal{L}_T^1(\dot{B}_{2,1}^{d/2})}\leq C h^4.
\end{align}
and thus
\begin{align}
\|\omega^{k+1}\|_{\mathcal{B}}\leq h^2
\end{align}
which completes the proof of the second estimate in \eqref{est 1}.\\
Finally, we consider the difference between two approximate solutions and set $\delta\omega^{k+1}=\omega^{k+2}-\omega^{k+1}$.
Then $\delta\omega^{k+1}$ is a solution to
\begin{align*}
\partial_t \delta\omega^{k+1}-\tilde{\kappa}\Delta\delta\omega^{k+1}=\delta G^k,
\end{align*}
where
\begin{align*}
\delta G^k=& k^\theta\big(\tilde{c}^{-1}+f(c^{k+1})\big)\sum_i \bigg( \nabla\delta z_i^k\cdot\nabla \omega^{k+1} +\nabla z_i^k\cdot\nabla \delta \omega^k\bigg)\\
&+ k^\theta \sum_i\nabla z_i^k\cdot\nabla \omega^k \delta\omega^k\big(\tilde{c}^{-2}+g(c^{k+1},c^k)\big)\\
&+k^\theta \nabla \delta\omega^k\cdot\big(\nabla \omega^{k+1}+\nabla\omega^k\big)\big(\tilde{\theta}^{-1}+f(\omega^{k+1})\big)\\
& +|\nabla \omega^k|^2\delta\omega^k\big(\tilde{\theta}^{-1}+g(\omega^{k+1},\omega^k)\big)\\
&+\kappa f(c^{k+1})\Delta\delta\omega^k+\kappa\Delta\omega^k \sum_i\delta z_i^k g(c^{k+1},c^k)\\
&+(\omega^{k+1}+\tilde{\theta})\big(\tilde{c}^{-1}+f(c^{k+1})\big)\bigg(\sum_j \sigma_j\frac{\delta z_j^k}{\tilde{c}_j}-\frac{\delta \omega^k}{\tilde{\theta}}\bigg)\\
&+\bigg(\sum_j\sigma_j\frac{z_j^k}{\tilde{c}_j}-\frac{\omega^k}{\tilde{\theta}}\bigg)\bigg(\delta\omega^k\big(\tilde{c}^{-1}+f(c^{k+1})\big)\\
&~~~ +(\omega^k+\tilde{\theta})\sum_i\delta z_i^k \big(\tilde{c}^{-2}+g(c^{k+1},c^k)\big)\bigg)\\
&+\sum_i\big(\omega^{k+1}\Delta\delta z_i^k +\delta\omega^k\Delta z_i^k\big)\big(\tilde{c}^{-1}+f(c^{k+1})\big)\\
&+\omega^k \sum_i \Delta z_i^k\sum \delta z_i^k \big(\tilde{c}^{-2}+g(c^{k+1},c^k)\big)
\end{align*}
Then by applying Theorem \ref{Besov Theorem} we have the following estimate
\begin{align}
\|\delta\omega^{k+1}\|_{\mathcal{L}_T^\infty(\dot{B}_{2,1}^{d/2})}+ \|\partial_t \delta\omega^{k+1}\|_{\mathcal{L}_T^1(\dot{B}_{2,1}^{d/2})}+\|\tilde{\kappa}\Delta\delta\omega^{k+1}\|_{\mathcal{L}_T^1(\dot{B}_{2,1}^{d/2})}\leq C\|\delta G^k\|_{\mathcal{L}_T^1(\dot{B}_{2,1}^{d/2})},
\end{align} 
where we estimate the last term as follows
\begin{align*}
\|\delta G^k\|_{\mathcal{L}_T^1(\dot{B}_{2,1}^{d/2})}&\lesssim  \big(\tilde{c}^{-1}+\|f(c^{k+1})\|_{\mathcal{L}_T^\infty(\dot{B}_{2,1}^{d/2})}\big)\sum_i  \|\nabla\delta z_i^k\|_{\mathcal{L}_T^2(\dot{B}_{2,1}^{d/2})}\|\nabla \omega^{k+1}\|_{\mathcal{L}_T^2(\dot{B}_{2,1}^{d/2})} \\
&+\big(\tilde{c}^{-1}+\|f(c^{k+1})\|_{\mathcal{L}_T^1(\dot{B}_{2,1}^{d/2})}\big)\sum_i\|\nabla z_i^k\|_{\mathcal{L}_T^2(\dot{B}_{2,1}^{d/2})}\|\nabla \delta \omega^k\|_{\mathcal{L}_T^2(\dot{B}_{2,1}^{d/2})}\\
&+\big(\tilde{c}^{-2}+\|g(c^{k+1},c^k)\|_{\mathcal{L}_T^\infty(\dot{B}_{2,1}^{d/2})}\big) \sum_i\|\nabla z_i^k\|_{\mathcal{L}_T^2(\dot{B}_{2,1}^{d/2})}\\
&~~~\times\|\nabla \omega^k\|_{\mathcal{L}_T^2(\dot{B}_{2,1}^{d/2})}\| \delta\omega^k\|_{\mathcal{L}_T^\infty(\dot{B}_{2,1}^{d/2})}\\
&+\big(\tilde{\theta}^{-1}+\|f(\omega^{k+1})\|_{\mathcal{L}_T^\infty(\dot{B}_{2,1}^{d/2})}\big)\|\nabla \delta\omega^k\|_{\mathcal{L}_T^2(\dot{B}_{2,1}^{d/2})}\\
&~~~\times \big(\|\nabla \omega^{k+1}\|_{\mathcal{L}_T^2(\dot{B}_{2,1}^{d/2})}+\|\nabla\omega^k\|_{\mathcal{L}_T^2(\dot{B}_{2,1}^{d/2})}\big) \\
&+\big(\tilde{\theta}^{-1}+\|g(\omega^{k+1},\omega^k)\|_{\mathcal{L}_T^\infty(\dot{B}_{2,1}^{d/2})}\big)\|\nabla \omega^k\|_{\mathcal{L}_T^2(\dot{B}_{2,1}^{d/2})}^2\|\delta\omega^k\|_{\mathcal{L}_T^\infty(\dot{B}_{2,1}^{d/2})}\\
&+\kappa \|f(c^{k+1})\|_{\mathcal{L}_T^\infty(\dot{B}_{2,1}^{d/2})}\|\Delta\delta\omega^k\|_{\mathcal{L}_T^1(\dot{B}_{2,1}^{d/2})}\\
&+\kappa\|\Delta\omega^k\|_{\mathcal{L}_T^\infty(\dot{B}_{2,1}^{d/2})} \sum_i\|\delta z_i^k\|_{\mathcal{L}_T^\infty(\dot{B}_{2,1}^{d/2})}\| g(c^{k+1},c^k)\|_{\mathcal{L}_T^\infty(\dot{B}_{2,1}^{d/2})}\\
&+(\|\omega^{k+1}\|_{\mathcal{L}_T^\infty(\dot{B}_{2,1}^{d/2})}+\tilde{\theta})\big(\tilde{c}^{-1}+\|f(c^{k+1})\|_{\mathcal{L}_T^\infty(\dot{B}_{2,1}^{d/2})}\big)\\
&~~~\times\bigg(\sum_j \frac{\|\delta z_j^k\|_{\mathcal{L}_T^1(\dot{B}_{2,1}^{d/2})}}{\tilde{c}_j}+\frac{\|\delta \omega^k\|_{\mathcal{L}_T^1(\dot{B}_{2,1}^{d/2})}}{\tilde{\theta}}\bigg)\\
&+\bigg(\sum_j\frac{\|z_j^k\|_{\mathcal{L}_T^\infty(\dot{B}_{2,1}^{d/2})}}{\tilde{c}_j}+\frac{\|\omega^k\|_{\mathcal{L}_T^\infty(\dot{B}_{2,1}^{d/2})}}{\tilde{\theta}}\bigg)\\
&~~~\times\bigg[\|\delta\omega^k\|_{\mathcal{L}_T^1(\dot{B}_{2,1}^{d/2})}\big(\tilde{c}^{-1}+\|f(c^{k+1})\|_{\mathcal{L}_T^\infty(\dot{B}_{2,1}^{d/2})}\big) +(\|\omega^k\|_{\mathcal{L}_T^\infty(\dot{B}_{2,1}^{d/2})}+\tilde{\theta})\\
&~~~~~~\times\sum_i\|\delta z_i^k\|_{\mathcal{L}_T^1(\dot{B}_{2,1}^{d/2})} \big(\tilde{c}^{-2}+\|g(c^{k+1},c^k)\|_{\mathcal{L}_T^\infty(\dot{B}_{2,1}^{d/2})}\big)\bigg]\\
&+\big(\tilde{c}^{-1}+\|f(c^{k+1})\|_{\mathcal{L}_T^\infty(\dot{B}_{2,1}^{d/2})}\big)\sum_i \|\omega^{k+1}\|_{\mathcal{L}_T^\infty(\dot{B}_{2,1}^{d/2})}\|\Delta\delta z_i^k\|_{\mathcal{L}_T^1(\dot{B}_{2,1}^{d/2})} \\
&+\big(\tilde{c}^{-1}+\|f(c^{k+1})\|_{\mathcal{L}_T^\infty(\dot{B}_{2,1}^{d/2})}\big)\sum_i\|\delta\omega^k\|_{\mathcal{L}_T^\infty(\dot{B}_{2,1}^{d/2})}\|\Delta z_i^k\|_{\mathcal{L}_T^1(\dot{B}_{2,1}^{d/2})}\\
&+\big(\tilde{c}^{-2}+\|g(c^{k+1},c^k)\|_{\mathcal{L}_T^\infty(\dot{B}_{2,1}^{d/2})}\big)\|\omega^k \|_{\mathcal{L}_T^\infty(\dot{B}_{2,1}^{d/2})}\\
&~~~\times\sum_i \|\Delta z_i^k\|_{\mathcal{L}_T^1(\dot{B}_{2,1}^{d/2})}\sum_i \|\delta z_i^k\|_{\mathcal{L}_T^\infty(\dot{B}_{2,1}^{d/2})} 
\end{align*}
Using the assumptions on the equilibrium state and the previous estimates yields
\begin{align*}
\|\delta G^k\|_{\mathcal{L}_T^1(\dot{B}_{2,1}^{d/2})}&\lesssim (h^2+h^4)\sum_i \bigg( \|\Delta\delta z_i^k\|_{\mathcal{L}_T^1(\dot{B}_{2,1}^{d/2})}+\|\nabla\delta z_i^k\|_{\mathcal{L}_T^2(\dot{B}_{2,1}^{d/2})}+\|\delta z_i^k\|_{\mathcal{L}_T^\infty(\dot{B}_{2,1}^{d/2})}\bigg)\\
&+(h^2+h^4)\bigg(\|\Delta\delta\omega^k\|_{\mathcal{L}_T^1(\dot{B}_{2,1}^{d/2})}+\|\nabla \delta \omega^k\|_{\mathcal{L}_T^2(\dot{B}_{2,1}^{d/2})}+\|\delta\omega^k\|_{\mathcal{L}_T^\infty(\dot{B}_{2,1}^{d/2})} \bigg)
\end{align*}
By combining this inequality with the induction assumption we obtain
\begin{align}
\|\delta G^k\|_{\mathcal{L}_T^1(\dot{B}_{2,1}^{d/2})}&\lesssim h^{k+2}
\end{align}
and therefore this results in the final estimate
\begin{align}
\|\delta\omega^{k+1}\|_{\mathcal{L}_T^\infty(\dot{B}_{2,1}^{d/2})}+ \|\partial_t \delta\omega^{k+1}\|_{\mathcal{L}_T^1(\dot{B}_{2,1}^{d/2})}+\|\tilde{\kappa}\Delta\delta\omega^{k+1}\|_{\mathcal{L}_T^1(\dot{B}_{2,1}^{d/2})}\leq h^{k+1}
\end{align}
which concludes the proof of the proposition.\\

The next step in the proof of Theorem \ref{main Theorem} is to pass to the limit.
From the uniform estimates obtained in Proposition \ref{existence prop} we can take the limit as $k$ goes to $\infty$.
Since $\big(z_A^k,z_B^k,z_C^k,\omega^k\big)_{k\in \mathbb{N}}$ is a Cauchy sequence the following convergence result holds:
\begin{align}
&z_i^k\to z_i~~\text{in } \mathcal{L}_T^\infty\big(\dot{B}_{2,1}^{d/2}\big),~~(\partial_t z_i^k,\Delta z_i^k)\to (\partial_t z_i,\Delta z_i)~~\in  \mathcal{L}_T^1\big(\dot{B}_{2,1}^{d/2}\big)~~\text{for } i=A,B,C\\
&\omega^k\to \omega~~\text{in } \mathcal{L}_T^\infty\big(\dot{B}_{2,1}^{d/2}\big),~~(\partial_t \omega^k,\Delta \omega^k)\to (\partial_t \omega,\Delta \omega)~~\in  \mathcal{L}_T^1\big(\dot{B}_{2,1}^{d/2}\big)
\end{align}
Therefore, by passing to the limit as $k\to \infty$ we obtain that 
\begin{align*}
\big(z_A,z_B,z_C,\omega\big)=\big(c_A-\tilde{c}_A,c_B-\tilde{c}_B,c_C-\tilde{c}_C,\theta-\tilde{\theta}\big)
\end{align*}
is a classical solution to the reaction-diffusion system with temperature close to equilibrium \eqref{R-D system 1}-\eqref{R-D system 2}\\

The final step in the proof of Theorem \ref{main Theorem} is to show the uniqueness of solutions.
\begin{prop}\label{unique prop}
Let the initial data $\big(z_{A,0},z_{B,0},z_{C,0},\omega_0\big)$ satisfy the assumptions of Theorem \ref{main Theorem} and let $\big(z_A^j,z_B^j,z_C^j,\omega^j\big)$ for $j=1,2$ be two classical solutions to the same initial data belonging to the space $\mathcal{B}$ defined in \eqref{B norm def}.
Setting $\delta z_i=z_i^1-z_i^2$ for $i=A,B,C$ and $\delta \omega=\omega^1-\omega^2$ the difference between the two solutions it follows that
\begin{align}
\sum_i\|\delta z_i\|_{\mathcal{B}}+\|\delta\omega\|_{\mathcal{B}}\lesssim h^2\bigg( \sum_i\|\delta z_i\|_{\mathcal{B}}+\|\delta\omega\|_{\mathcal{B}}\bigg).
\end{align} 
\end{prop}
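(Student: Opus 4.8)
The plan is to run the same computation as the ``difference of two consecutive iterates'' part of Proposition~\ref{existence prop}, now with the two genuine solutions in place of $(z^{k+2},\omega^{k+2})$ and $(z^{k+1},\omega^{k+1})$, and using that both of them are small in $\mathcal B$ (Theorem~\ref{main Theorem}) in place of an induction hypothesis. First I would subtract the two copies of the perturbed system~\eqref{Perturbed system 1}--\eqref{Perturbed system 2}, keeping on the left the linear parabolic operators that were used in the scheme,
\begin{align*}
\partial_t\delta z_i-k^c\Delta\delta z_i=\delta F_i,\qquad \partial_t\delta\omega-\tilde\kappa\Delta\delta\omega=\delta G,\qquad \tilde\kappa=\frac{(k^c)^2}{k^\theta}+\frac{\kappa}{k^\theta\sum_i\tilde c_i},
\end{align*}
where $\delta F_i=F_i(z^1,\omega^1)-F_i(z^2,\omega^2)$ and $\delta G=G(z^1,\omega^1)-G(z^2,\omega^2)$ have exactly the algebraic shape already written for $\delta F_i^k$ and $\delta G^k$ in the proof of Proposition~\ref{existence prop}, with the superscripts $k+1,k$ replaced by $1,2$ and the increments by $\delta z_i,\delta\omega$. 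Since the two solutions share the same initial data, $\delta z_i$ and $\delta\omega$ have zero Cauchy data, and Theorem~\ref{Besov Theorem} (applied on $[0,T)$, or directly with $T=\infty$) gives
\begin{align*}
\sum_i\|\delta z_i\|_{\mathcal B}+\|\delta\omega\|_{\mathcal B}\lesssim\sum_i\|\delta F_i\|_{\mathcal L_T^1(\dot B_{2,1}^{d/2})}+\|\delta G\|_{\mathcal L_T^1(\dot B_{2,1}^{d/2})},
\end{align*}
so the whole task reduces to estimating the two forcing terms.

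The remaining work is to bound these forcings by $h^2$ times the left-hand side. I would expand each nonlinear monomial of $\delta F_i$, $\delta G$ as a telescoping difference via $ab-a'b'=(a-a')b+a'(b-b')$, and rewrite the differences of the composite factors $1/(\omega+\tilde\theta)$, $1/(\omega+\tilde\theta)^2$, $f(\omega)$, $g(\omega)$ as $\delta\omega$ times a smooth (possibly two-variable) function of $(\omega^1,\omega^2)$, precisely as in those formulas (taking $\eta_i=1$, so the $(\eta_i-1)$ term drops). Each resulting product is then estimated in $\mathcal L_T^1(\dot B_{2,1}^{d/2})$ using the algebra/product rule of Chapter~3, distributing time integrability by H\"older so that each pair of first-order factors is split as $\mathcal L_T^2\times\mathcal L_T^2\to\mathcal L_T^1$ (the $\mathcal L_T^2$ gradient norms being controlled through the smoothing corollary to Theorem~\ref{Besov Theorem} with $q=2$), and handling the composite factors by Lemma~\ref{Besov Lemma} and its corollary, $\|f(\omega)\|_{\mathcal L_T^\infty(\dot B_{2,1}^{d/2})}\le Q(f,\|\omega\|_{L^\infty})\|\omega\|_{\mathcal L_T^\infty(\dot B_{2,1}^{d/2})}$. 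What makes the argument close is that, term by term, every monomial of $\delta F_i$ and $\delta G$ carries at least one genuinely small factor — a reaction coefficient $1/\tilde c_j$ or $1/\tilde\theta\le h^2$, a norm $\|z_j^m\|_{\mathcal B},\|\omega^m\|_{\mathcal B}\le h^2$ for $m=1,2$, or a composition factor $Q(f,\cdot)\|\omega^m\|_{\mathcal B}$ or $Q(g,\cdot)\|\omega^m\|_{\mathcal B}$ — while the other factors are either a priori bounded or are difference norms $\|\delta z_i\|_{\mathcal B},\|\delta\omega\|_{\mathcal B}$, and the only large factor that appears, $\tilde c_i$ of size $h^{-2}$, is always multiplied by two further small factors. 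Summing gives
\begin{align*}
\sum_i\|\delta F_i\|_{\mathcal L_T^1(\dot B_{2,1}^{d/2})}+\|\delta G\|_{\mathcal L_T^1(\dot B_{2,1}^{d/2})}\lesssim h^2\Big(\sum_i\|\delta z_i\|_{\mathcal B}+\|\delta\omega\|_{\mathcal B}\Big),
\end{align*}
which together with the linear estimate is the asserted inequality; since $h$ is small, one then absorbs the right-hand side into the left and concludes $\delta z_i=\delta\omega=0$, i.e.\ uniqueness.

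The main obstacle is organizational rather than conceptual: one must check, for the genuinely trilinear pieces — those stemming from $(z_i+\tilde c_i)|\nabla\omega|^2/(\omega+\tilde\theta)^2$ in $\delta F_i$ and the analogous cubic terms in $\delta G$, whose differences are sums of products of the type $\delta(\text{coeff})\cdot\nabla\omega^1\cdot\nabla\omega^2$, $\text{coeff}\cdot\nabla\delta\omega\cdot\nabla\omega^m$, and $\text{coeff}\cdot\nabla\omega^1\cdot\nabla\omega^2\,\delta\omega$ — that once one gradient pair has been spent on the $\mathcal L_T^2\times\mathcal L_T^2\to\mathcal L_T^1$ H\"older step there is still a net factor of size $h^2$ left, which here requires combining the $h^2$ smallness of the solutions with the $h^2$ smallness of products such as $\tilde c_i/\tilde\theta^2$; and, likewise, that no rearrangement of $\delta G$ ever leaves the large factor $\tilde c_i$ without two compensating small factors. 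Once this term-by-term bookkeeping is carried out the estimate follows at once, and, because the constants in Theorem~\ref{Besov Theorem} and in the product rules do not depend on $T$, the conclusion is global.
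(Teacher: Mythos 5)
Your proposal is correct and follows essentially the same route as the paper, which itself disposes of this proposition in one line by saying the argument repeats the difference estimates for consecutive iterates in the proof of Proposition \ref{existence prop}; you have simply spelled out that repetition (zero Cauchy data for the difference, Theorem \ref{Besov Theorem}, then the term-by-term $h^2$-smallness bookkeeping for $\delta F_i$ and $\delta G$ with the two solutions' $\mathcal B$-bounds replacing the induction hypothesis). The absorption step at the end is likewise exactly how the paper concludes uniqueness from the proposition.
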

This implies that for $h>0$ small enough we have
\begin{align*}
\sum_i\|\delta z_i\|_{\mathcal{B}}+\|\delta\omega\|_{\mathcal{B}}=0
\end{align*}
and therefore the two solutions coincide.\\
The proof of the proposition follows by repeating the arguments used to bound the differences of two approximate solutions in equations \eqref{R-D system 1} and \eqref{R-D system 2}.\\
This concludes the proof of the well-posedness result for the chemical reaction-diffusion system with temperature.

\subsection{Conclusion and Remarks}
From the general model of the non-isothermal reaction-diffusion system we can obtain the ideal gas model by considering only one species with density $\rho$ and by setting the reaction rate to zero, see \cite{Sulzbach} for more details in the derivation.
Thus the system has the following form
\begin{align*}
\partial_t \rho-k^\rho	\Delta \rho &= k^\rho \nabla\cdot\big(\rho \nabla \ln \theta)\\
k^\theta\rho\bigg(\partial_t\theta-k^\theta \frac{\nabla \rho\cdot\nabla \theta}{\rho}-k^\theta \frac{|\nabla \theta|^2}{\theta}\bigg)&=\kappa \Delta \theta +(k^\rho)^2(\eta -1)\frac{\nabla(\rho\theta)}{\rho\theta}+(k^\rho)^2\Delta(\rho\theta).
\end{align*}
Similar, by using a different constitutive relation in the dissipation we can obtain the ideal gas system discussed in \cite{Lai}
\begin{align*}
\partial_t \rho &=\Delta(\rho \theta)\\
k^\theta\partial_t(\rho\theta)-k^\rho(k^\rho+k^\theta)\nabla\cdot\bigg(\theta\nabla(\rho	\theta)\bigg)&=\nabla\cdot (\kappa\nabla \theta).
\end{align*}
We observe that the well-posedness result for the reaction-diffusion systems (Theorem \ref{main Theorem}) can be applied to both systems, yielding the existence of solutions to a system with small perturbations.

For a different approach to these systems we refer to \cite{Sulzbach}, where the existence of weak solutions to the Brinkman-Fourier system on a bounded domain is proven by using energy estimates rather then scaling arguments.

As for future work we want to extend the derivation of non-isothermal fluid mechanics to non-local systems with the porous media equation and the Poisson-Nerst-Plank equation as examples, see \cite{Deng} for the case without temperature.

\section*{Acknowledgments}
The authors would like to express their thank to Dr Yiwei Wang and Professor Tengfei Zhang for the inspiring discussions and new insights.
The work was partially supported by DMS-1950868, and the United States – Israel Binational Science Foundation (BSF) {\#}2024246 .

\printbibliography

\end{document}